\subjclass{}
\newcommand{\myhref}[2]{\href{#1}{#2\,{\footnotesize\faExternalLink}}}
\def\Diff{\operatorname{Diff}}
\def\Div{\operatorname{div}}
\def\vol{\operatorname{vol}}
\def\Jac{\operatorname{Jac}}
\newcommand{\ud}{\,\mathrm{d}}
\def\R{{\mathbb R}}
\def\S{{\mathbb S}}
\newtheorem{question}{Question}
\newtheorem{definition}{Definition}
\newtheorem{theorem}{Theorem}
\newtheorem*{theorem*}{Theorem}
\newtheorem{conjecture}{Conjecture}
\newtheorem{problem}{Problem}
\newtheorem{proposition}{Proposition}
\newcommand{\eqdef}{\ensuremath{\stackrel{\mbox{\upshape\tiny def.}}{=}}}
\begin{document}

\maketitle

\begin{introduction}
\noindent
The workshop brought together researchers in shape analysis to discuss promising new directions. 
Shape analysis is an inter-disciplinary area of research with theoretical foundations in infinite-dimensional Riemannian geometry, geometric statistics, and geometric stochastics, and with applications in medical imaging, evolutionary development, and fluid dynamics.
It is the 6th instance of a series of workshops, which were held at the following locations:
\begin{enumerate}
\item[6.] \textbf{Rinkenklause am Feldberg, Germany, 2018.} \\ Supported by University of Freiburg. \myhref{https://www.stochastik.uni-freiburg.de/professoren/philipp-harms/shapeworkshop}{Website}
\item[5.] \textbf{Skrunda, Latvia, 2017.} \\ Supported by Brunel University London. \myhref{http://www.brunel.ac.uk/~mastmmb/mathlatvia.html}{Website}
\item[4.] \textbf{Tende, France, 2016.} \\ Supported by INRIA.
\item[3.] \textbf{Grundsund, Sweden, 2015.} \\ Supported by Chalmers University. \myhref{https://www.chalmers.se/sv/institutioner/math/forskning/konferenser-pa-MV/Shape-Analysis-Meeting/Pages/default.aspx}{Website} \myhref{http://dx.doi.org/10.5281/zenodo.33558}{Proceedings}
\item[2.] \textbf{Bad Gastein, Austria, 2014.} \\ Supported by the Austrian Science Foundation. \myhref{http://www.brunel.ac.uk/~mastmmb/mathcabin.html}{Website} \myhref{https://hal.archives-ouvertes.fr/hal-01076953v1}{Proceedings}
\item[1.] \textbf{Foxton Beach, New Zealand, 2013.} \\ Supported by Massey University.
\end{enumerate}
\end{introduction}
The participants of the workshop are: 
Martin Bauer, 
Martins Bruveris, 
Nicolas Charon, 
Philipp Harms, 
Sarang Joshi, 
Boris Khesin, 
Alice Le Brigant,
Elodie Maignant, 
Stephen Marsland,
Peter Michor, 
Xavier Pennec,
Stephen Preston, 
Stefan Sommer, 
Fran\c{}ois-Xavier Vialard, 

\medskip\noindent
{\em Acknowledgement:} Philipp Harms would like to thank the University of Freiburg for supporting the workshop.

\clearpage

\tableofcontents

\begin{report}

\begin{talk}{Martin Bauer}
{Geodesic completeness of weak Riemannian metrics: A conjecture}
{Martin Bauer}

In this note we aim to introduce a conjecture on geodesic completeness of weak Riemannian metrics. 
We refer to \cite{BBM2014} to an introduction on infinte dimensional manifolds in a simialar notation.

\begin{conjecture}
Let $G^1$ and $G^2$  be smooth, weak Riemannian metrics on an infinte-dimensional Banach manifold $\mathcal H$ that satisfy:
\begin{enumerate}
\item The metric $G^2$ is uniformly stronger then the metric $G^1$, i.e., there exists a constant $C$ such that for all points $x\in\mathcal H$ and tangent vectors $h_x, k_x$ we have $G^1_x(h_x,k_x)\leq C G^2_x(h_x,k_x)$;
\item Both metrics admit a smooth geodesic spray and thus the Riemannian exponential map are  local diffeomorphisms
\item The metric $G_1$ is geodesically complete, i.e., for any initial conditions $(x,h_x)$ the geodesic initial value problem has solutions for all time $t$
\end{enumerate}
Then the metric $G^2$ is geodesically complete.

These are only the minimal assumptions that will be required for this conjecture to have change to be correct. Possible additional assumptions that might be needed include:
\begin{enumerate}[start=4]
\item The space $\mathcal H$ is in addition a topological group and the metrics $G^1$ and $G^2$ are right invariant
\item The derivatives of the metric $G^1$ are controlled in terms of derivatives of $G^2$.
\end{enumerate}
\end{conjecture}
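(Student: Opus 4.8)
The plan is to argue by the standard no-blow-up (escape) criterion for the geodesic flow. Since $G^2$ admits a smooth geodesic spray, geodesics are integral curves of a smooth second-order vector field on $T\mathcal H$, so by Banach-space ODE theory they exist and are unique locally, and any solution defined on a maximal interval $[0,T)$ with $T<\infty$ must eventually leave every set on which the spray flow has a uniform existence time. Accordingly I would fix a $G^2$-geodesic $\gamma$ with maximal existence time $T$, assume $T<\infty$, and aim to derive a contradiction by showing that $\gamma$, together with its velocity, in fact converges in $T\mathcal H$ as $t\to T$; the local-diffeomorphism property of the $G^2$-exponential map from assumption (2) then lets one restart the geodesic past $T$.

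The first genuine step uses conservation of speed: for a metric with a smooth spray the energy $\tfrac12 G^2_{\gamma}(\dot\gamma,\dot\gamma)$ is constant along geodesics, so $\|\dot\gamma(t)\|_{G^2}\equiv v$. Assumption (1) gives $\|\dot\gamma(t)\|_{G^1}\le \sqrt C\,\|\dot\gamma(t)\|_{G^2}=\sqrt C\,v$, whence for $0\le s<t<T$
\[
d_{G^1}(\gamma(s),\gamma(t))\le \int_s^t \|\dot\gamma\|_{G^1}\ud r\le \sqrt C\, v\,(t-s).
\]
Thus $\gamma$ has finite $G^1$-length and is $G^1$-Cauchy as $t\to T$. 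This is the point where assumption (3) should enter: I want geodesic completeness of $G^1$ to force $\gamma(t)$ to converge to a limit $x_*\in\mathcal H$.

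The hard part will be precisely this upgrade from ``$G^1$-Cauchy of finite length'' to ``convergent in the manifold topology of $\mathcal H$, with convergent velocity in $T\mathcal H$.'' In finite dimensions there is essentially nothing to prove: Hopf--Rinow makes $G^1$-geodesic completeness equivalent to metric completeness, the $G^1$-Cauchy endpoint converges to some $x_*$ in the manifold topology, $\|\dot\gamma\|_{G^2}$ is bounded, and local compactness closes the argument. In the infinite-dimensional setting all three of these implications can fail: geodesic completeness need not imply metric completeness, convergence in the weak distance $d_{G^1}$ need not imply convergence in the Banach topology of $\mathcal H$, and a velocity of constant weak norm need not lie in a precompact subset of $T\mathcal H$. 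This is exactly the gap the conjecture leaves open, and I expect it to be the main obstacle.

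To close it one presumably needs the auxiliary hypotheses (4)--(5). Under right invariance (4) I would trivialize $T\mathcal H\cong \mathcal H\times\mathfrak g$ and rewrite the geodesic equation as the associated Euler--Arnold equation; conservation of the $G^2$-momentum then controls the reduced velocity, while hypothesis (5), bounding the derivatives of $G^1$ by those of $G^2$, should let me estimate the $G^2$-Christoffel and spray terms and thereby bound the growth of $(\gamma,\dot\gamma)$ in the strong Banach norm on $[0,T)$. Combining such an estimate with the $G^1$-Cauchy property should yield convergence of $(\gamma(t),\dot\gamma(t))$ in $T\mathcal H$ and hence, via assumption (2), the desired contradiction with maximality of $T$, giving $T=\infty$ and geodesic completeness of $G^2$.
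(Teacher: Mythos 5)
The statement you are attempting is presented in the paper as an open \emph{conjecture}, not a theorem: the text explicitly notes that Hopf--Rinow settles it trivially in finite dimensions but that ``in infinite dimensions, however, the answer to this question is widely open.'' So there is no proof in the paper to compare yours against, and your attempt does not supply one either --- to your credit, you say so yourself, since the argument runs out exactly where the conjecture becomes nontrivial.

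Concretely, the step that fails is the upgrade from ``$\gamma$ is $d_{G^1}$-Cauchy with finite $G^1$-length'' to ``$(\gamma(t),\dot\gamma(t))$ converges in $T\mathcal H$.'' In infinite dimensions, geodesic completeness of $G^1$ (assumption (3)) does not imply metric completeness of $(\mathcal H,d_{G^1})$ --- Hopf--Rinow is unavailable --- so the Cauchy property hands you no limit point at all. Worse, for a \emph{weak} Riemannian metric the induced geodesic distance can be degenerate: for the right-invariant $L^2$ metric on $\Diff(M)$ it vanishes identically (Michor--Mumford), in which case your estimate $d_{G^1}(\gamma(s),\gamma(t))\le \sqrt{C}\,v\,(t-s)$ carries no information whatsoever, and assumption (3) never enters the argument in any effective way. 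Even granting a limit point $x_*$ in some completion, you would still need convergence of the velocity in the Banach manifold topology of $T\mathcal H$ to restart the flow via assumption (2), and boundedness of the weak norm $\|\dot\gamma\|_{G^2}$ gives no precompactness in that topology. Your final paragraph, where assumptions (3)--(5) would actually have to do the work (Euler--Arnold reduction, momentum conservation, spray estimates), consists of ``should,'' ``presumably,'' and ``I expect'': that is a restatement of the research program the conjecture itself proposes, not a proof. In sum, your proposal is a correct diagnosis of why the conjecture is hard, but it establishes nothing beyond the finite-dimensional case that the paper already dismisses as trivial.
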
 
By the theorem of Hopf-Rinow this conjecture is trivially satisfied in finite dimensions. In infinte dimensions, however, the answer to this question is widely open.

The motivation for this conjecture comes from the study of right invariant Sobolev metrics
on groups of diffeomorphisms of finite Sobolev regularity: it has been shown that right invariant Sobolev metric of order $s\in \mathbb R$ are geodesically complete if $s\geq \frac32$ \cite{EK2014a,PW2018}, while there always exist solutions that blow up in finite time for 
$s=0$, $s=\frac12$ \cite{okamoto2008generalization,PW2018,BKP2016} and $s=1$ \cite{CHH1994,constantin1998wave}. As a consequence of the above conjecture we would obtain geodesic incompleteness (blowup of solutions) 
for all metrics of order less than $s\leq 1$, which would reduce the open cases to the interval $1<s<\frac32$. Analogous results would follow for diffeomorphism groups of higher dimensional manifolds and reparametrization invariant metrics on more general spaces of mappings, e.g. 
on shape spaces of surfaces or on the space of all Riemannian metrics.

\def\cprime{$'$}

\end{talk}

\begin{talk}{Nicolas Charon}
{Riemannian metrics on shapes induced by measure embeddings}
{Charon, Nicolas}

\section{Measure representation of shapes}
Consider a space $\text{Emb}(M,\R^n))$ of smooth, parametrized and oriented submanifolds in $\R^n$ where $M$ is the parameter manifold of dimension $1\leq d \leq n$. The core principle of geometric measure theory frameworks such as currents, varifolds, normal cycles... is to view any $q \in \text{Emb}(M,\R^n)$ as a measure $\mu_{q}$ over a certain set, in such a way that $\mu_q$ is invariant to reparametrization. Here, we extend the approach of oriented varifolds introduced in \cite{Charon1}. We assume that a certain Banach space of test functions $W$ is given, with $W \hookrightarrow C_0(\R^n \times G_d^+)$ and $G_d^+$ the oriented Grassmannian of all oriented $d$-planes in $\R^n$. 

\begin{definition}
For any $q \in \text{Emb}(M,\R^n)$, we define the oriented varifold associated to $q$ as the measure $\mu_q \in W^*$ given by:
\begin{equation}
\mu_q(\omega) \doteq \int_M \omega\left(q(m),T_m q\right) d \vol(m)
\end{equation}
for all test function $\omega \in W$, $T_m q$ denoting the element of $G_d^+$ that represents the tangent space to $q$ at $m$.
\end{definition}
The change of variable formula allows to show that for any positive repara\-metrization $\phi \in \text{Diff}^+(M)$, we have $\mu_{q\circ \phi} = \mu_{q}$. In other words, the previous representation yields a quotient mapping $[\mu]$ from $\text{Emb}(M,\R^n))/\text{Diff}^+(M)$, the space of unparametrized oriented shapes, into the dual space $W^*$.

The mapping $[\mu]$ depends on the choice of $W$ and is not necessarily injective, in particular if the test function space is not 'rich enough'. It is injective when for example $W$ is dense in $C_0(\R^n \times G_d^+)$ although weaker conditions on $W$ can still lead to the injectivity of $[\mu]$ as well, cf \cite{Charon1}. In those cases, the dual metric on $W^*$ restricted to varifolds $\mu_{q}$ leads to a 'chordal' distance between shapes: $d^{chor}(q_1,q_2) = \|\mu_{q_1} - \mu_{q_2}\|_{W^*}$. So far, these types of distances have been regularly used as fidelity terms in diffeomorphic registration frameworks due to the fairly simple closed form expressions that one can derive when $W$ is for instance a Reproducing Kernel Hilbert Space. 

\section{The induced Riemannian metrics}
The previous chordal distances, however, have no direct interpretation on the shape space of submanifolds as the straight path in $W^*$ between $\mu_{q_1}$ and $\mu_{q_2}$ leaves, in general, the image of $\text{Emb}(M,\R^n)/\text{Diff}^+(M)$ by $[\mu]$. An alternative and possibly interesting idea, from a theoretical point of view, would be to consider instead the Riemannian (or Finsler) metrics induced by the embedding $[\mu]$ on $\text{Emb}(M,\R^n)/\text{Diff}^+(M)$.

The construction can be done as follows. Assuming enough regularity on the test functions in $W$ (typically $W \hookrightarrow C_0^2(\R^n \times G_d^+)$ suffices), one can express the tangent map to $\mu$ at any $q \in \text{Emb}(M,\R^n)$ as a mapping $T_{q} \mu: \ C^{\infty}(M,\R^n) \rightarrow W^*$. This is usually known as the first variation of the varifold and has been derived in a similar setting in 
\cite{Charon2}. Then, for $q \in \text{Emb}(M,\R^n)$ and a vector field $h \in C^{\infty}(M,\R^n)$, the induced metric writes:
\begin{equation}
 \|h\|_{q} \doteq \|T_q \mu(h) \|_{W^*}
\end{equation}
It is by construction a parametrization-invariant metric on $\text{Emb}(M,\R^n)$ for which we can expect the resulting distance on the quotient space $\text{Emb}(M,\R^n)/\text{Diff}^+(M)$ to be non-degenerate whenever $W$ is chosen so that $[\mu]$ is injective. Furthermore, some additional invariances like translation and rotation invariance can be easily recovered by carefully selecting the space $W$ and its metric (cf next section).
\vskip2ex
\noindent \textbf{Open questions}: this looks like a potentially quite modular approach to construct invariant metrics on spaces of curves, surfaces... but very little is clear at present about those metrics' properties or how they could relate to previously studied higher-order Sobolev or quasi-local metrics. It is also likely that the adequate space to consider from the start is $\text{Imm}(M,\R^n)$ instead of $\text{Emb}(M,\R^n)$, in which case the injectivity of $[\mu]$ cannot hold in all generality (some transversality conditions on the shapes are needed) and thus the non-degeneracy of the metric is not immediate.

\section{Case of closed curves}
In order to provide a slightly more explicit view of the above, let us specify to closed curves in $\R^n$ by taking $M=\S^1$. In this case, the oriented Grassmannian $G_d^+$ is simply the sphere $\S^{n-1}$ of all possible unit tangent vectors to a curve in $\R^n$. For a parametrized curve $c \in \text{Emb}(\S^1,\R^n)$, its associated varifold becomes:
\begin{equation}
\label{eq:var_curve}
\mu_c(\omega) = \int_{\S^1} \omega(c(\theta),\vec{t}_c(\theta)) ds
\end{equation}
where $ds = |c'(\theta)| d\theta$ and $\vec{t}_c(\theta) = c'(\theta)/|c'(\theta)| \in \S^{n-1}$. Its first variation can be here computed directly from \eqref{eq:var_curve} and after a few integration by parts, on finds:
\begin{equation}
\label{eq:tanmap_var_curve}
T_c\mu(h)(\omega) = \int_{\S^1} [\partial_x \omega - \partial^2_{x,u} \omega \cdot \vec{t}_c - \partial^2_{u,u} \cdot H_c -(\omega - \partial_u \omega \cdot \vec{t}) \cdot H_c]\cdot h^{\bot} ds
\end{equation}
in which $H_c$ is the curvature vector of $c$ and $h^{\bot}=(h\cdot \vec{n}) \vec{n}$ is the component of the vector field normal to $c$. 

Now, if we take $W$ as a Hilbert space and let $K_W: W^* \rightarrow W$ be the Riesz duality map, the induced Riemannian metric can be rewritten as 
\begin{equation*}
\|h\|_c^2 = (T_c\mu(h) | K_W T_c\mu(h))_{W^*,W}. 
\end{equation*}
As we also have by assumption the continuous embedding $W \hookrightarrow C_0(\R^n \times \S^{n-1})$, $W$ is a Reproducing Kernel Hilbert Space on $\R^n \times \S^{n-1}$ and $K_W T_c\mu(h)$ may be expressed based on the associated kernel function and its derivatives. 

In particular, it may be relevant to focus on the class of separable kernels considered in \cite{Charon1}, which are the product of a kernel of the form $\rho(|x-y|^2)$ on $\R^n$ and a kernel of the form $\gamma(u\cdot v)$ on $\S^{n-1}$. This choice indeed leads to a translation and rotation invariant metric on curves. The resulting expression for the metric involves multiple cross-terms with derivatives of $\rho$ and $\gamma$ of order up to 2:
\begin{align*}
\|h\|_c^2 &= \iint_{\S^1\times \S^1} \rho(|c(\theta)-c(\theta')|^2) \gamma(\vec{t}(\theta)\cdot \vec{t}(\theta')) (H_c(\theta)\cdot h^{\bot}(\theta)) (H_c(\theta') \cdot h^{\bot}(\theta')) ds ds' \\
&+\ldots
\end{align*}
with 8 additional terms of similar form. This general formula remains clearly more complex to interpret than Sobolev or quasi-local metrics. Yet studying some simpler particular cases, like $\gamma \equiv 1$ (standard measure metrics) or infinitely small scale for the kernel $\rho$ may provide clearer insight about the nature of those metrics.

\end{talk}

\clearpage

\begin{talk}{Philipp Harms and Elodie Maignant}
{Approximations of distances and kernels on shape space}
{Harms, Philipp}

\section{Motivation}

Shape spaces are inherently nonlinear: one cannot simply add or multiply two shapes and obtain a shape of the same type. 
Two successful and widely used machine learning algorithms which are able to deal with this nonlinearity are kernelized support vector machines (SVM) and multi-dimensional scaling (MDS).   
Both algorithms take as input a square matrix, whose entries are pairwise evaluations of a kernel in the case of SVMs and pairwise distances in the case of MDS. 
The calculation of these matrices can be prohibitively expensive for large datasets, and we suggest several approximations which are easier to compute. 

\section{Approximate distance matrices}
The calculation of pairwise distances between sample points on a Riemannian manifold can be prohibitively expensive if there are many samples or the dimension of the manifold is high. We propose two approximations which have the advantage that the number of geodesic boundary value problems to be solved is linear instead of quadratic in the number of samples. 
\subsection{Taylor expansion of the squared distance function}
It is well-known that the squared Riemannian distance function admits a Taylor expansion of the form
\begin{equation}\label{PH:taylor}\tag{$*$}
\operatorname{dist}^2_M(\exp_x(u),\exp_x(v))=\|u-v\|^2_{g(x)} - \frac13 R_x(u,v,v,u) + O(5),
\end{equation}
where $x$ is a point on the Riemannian manifold $(M,g)$, $u$ and $v$ belong to the tangent space at $x$, $R$ is the Riemannian curvature, and $O(5)$ denotes a function of $u,v$ which vanishes at zero of 5th order with respect to $g(x)$.
There are also higher-order expansions in the literature.
\subsection{Approximation by symmetric spaces}
The Taylor series \eqref{PH:taylor} has unrealistic polynomial fourth order growth for large $u$ and $v$. A remedy is to use the approximation\footnote{We are grateful to Peter Michor and Xavier Pennec for helping to make this idea precise.}
\begin{equation*}
\operatorname{dist}^2_M(\exp_x(u),\exp_x(v))\approx\operatorname{dist}^2_{\tilde M}(\exp_{\tilde x}(\tilde u),\exp_{\tilde x}(\tilde v)),
\end{equation*}
where the objects marked by a tilde are suitable approximations. For example, one may choose a two-dimensional symmetric space $(\tilde M,\tilde g)$ which matches the curvature at $x$ of the surface spanned by $u,v\in T_xM$. 
\section{Approximate heat kernels}
Heat kernels provide natural feature maps for kernel methods, but are expensive to compute. For sufficiently localized data they may be approximated using heat kernel expansions. Moderate deviations regimes seem better suited than the more commonly used large deviations regimes because they formalize a more intuitive notion of concentration of the data near a point and are easier to compute. It is an important challenge to develop fast (approximative) methods for evaluating heat kernels on Riemannian manifolds.
\end{talk}

\begin{talk}[Anton Izosimov]{Boris Khesin}
{Geometry behind vortex sheet}
{Khesin, Boris}

In 1966 V.I.~Arnold developed a group-theoretic approach to ideal hydrodynamics in which the Euler equation for an inviscid incompressible fluid is described as the geodesic flow equation for a right-invariant $L^2$-metric on the group of volume-preserving diffeomorphisms of the flow domain. 

This setting assumes sufficient smoothness of the initial velocity field $u$. 
In particular, it does not, generally speaking, describe flows with vortex sheets, i.e. with jump discontinuities 
in the velocity. On the other hand, it was recently discovered by F.\,Otto and C.\,Loeschcke 
that the motion of vortex sheets is also 
governed by a geodesic flow, but of somewhat different origin. 
Consider the space $VS(M)$ of vortex sheets (of a given topological type) in $M$, 
i.e. the space of hypersurfaces which bound diffeomorphic domains of a fixed volume in $M$.
Define the following (weak) metric  on the space $VS(M)$. A tangent vector to a point $\Gamma$ 
in the space  of all vortex sheets $VS(M)$ can be regarded as   a vector field $u$  
attached at the vortex sheet $\Gamma \subset M$ and normal to it. 
Then its square length is set to be
$$
\langle u,u\rangle_{vs}:=\inf\left\{\int_M (v,v)\mu \mid {\rm div} u=0, \mbox{ and } (v,n)\,n = u  \mbox{ on } \Gamma\right\},
$$
i.e. $v$ is a smooth divergence-free vector field in $M$, $n$ is the unit normal field to $\Gamma$,
and the normal component of $v$ is given by the field $u$ on $\Gamma$. 
Then the fluid flow with such vortex sheets satisfies the following variational principle:
Geodesics with respect to this metric  on  the space  $VS(M)$ 
describe the motion of  vortex sheets in an incompressible flow 
which is globally potential outside of the vortex sheet. 

To unify these two geodesic approaches, as well as to develop Arnold's approach to cover velocity fields 
with discontinuities, we introduce the Lie  groupoid of volume-preserving diffeomorphisms 
of a manifold $M$ that are discontinuous along a hypersurface, see details in \cite{IK17}.
\medskip

{\bf Theorem.}
{\it The Euler equation for a fluid flow with a vortex sheet $\Gamma\subset M$  is a geodesic
equation for the right-invariant $L^2$-metric on (source fibers of) the Lie groupoid 
of discontinuous volume-preserving diffeomorphisms.
}
\medskip

In particular, this leads to consideration of the following metric on the space $VS$ of closed hypersurfaces (vortex sheets) in $M$. Denote by $D^\pm$  components of $M\setminus \Gamma$. 
\medskip

{\bf Definition.}
For  a tangent vector $u$ to the base $VS$  this metric reads
$$
     \langle u, u \rangle_{vs} =  \int_{D^+_\Gamma}(\nabla f^+, \nabla f^+)\,\mu 
+  \int_{D^-_\Gamma}(\nabla f^-, \nabla f^-)\,\mu\,,
$$
where $\Delta f^\pm=0$ in $D^\pm_\Gamma$ and the normal component of $\nabla f^\pm$ at $\Gamma$ is $u$:
$\partial f^\pm/\partial n=(u,n)$.
Equivalently, upon integration by parts, we have 
$$
\langle u,u\rangle_{vs}= \sum \int_{\Gamma}f^\pm \,i_n\mu =   \int_{\Gamma} ({NtD}^+ + {NtD}^-) (u,n) \,i_u\mu\,, 
$$
where the Neumann-to-Dirichlet operators $NtD^\pm$ 
on the domains $D^\pm_\Gamma$  are regarded as operators on functions.

\end{talk}

\begin{talk}[St\'{e}phane Puechmorel]{Alice Le Brigant}
{Optimal Riemannian quantization}
{Le Brigant, Alice}

Optimal quantization is concerned with finding the best approximation of a probability distribution $\mu$ by a discrete measure $\mu_n$ supported by a given number $n$ of points \cite{graf}. While the case of probability distributions on a Riemannian manifold has recently received attention \cite{iacobelli}, most work on optimal quantization is suited for vector or functional data, and the Riemannian setting is lacking numerical schemes to compute the optimal approximation $\mu_n$.

\section{The optimal quantization problem}
Let $\mu$ be a probability measure on a complete Riemannian manifold $M$, with compact support. One way to express the optimal quantization problem is as follows: given $X$ a random variable with distribution $\mu$, approximate it by a \emph{quantized version} $q(X)$, where $q:M\rightarrow M$ maps $M$ to a set of at most $n$ points ($|q(M)|\leq n$) and minimizes the following $L^p$ cost
 \begin{equation}
 \label{e1}
\mathbb E_\mu\left(d(X,q(X))^p\right)
 \end{equation}
If $q(M) =\{a_1,\hdots,a_n\}$, then for all $x\in M$, $d(x,q(x))\geq \min_{1\leq i\leq n}d(x,a_i)$, with equality if and only if $q$ is the projection to the nearest neighbor of $q(M)$. Therefore, the mapping $q$ minimizing \eqref{e1} is necessarily of the form
\begin{equation}
\label{optq}
q(x) = \sum_{i=1}^n a_i \mathbf{1}_{C_i}(x), 
\end{equation}
where $C_i=\{ x\in M, d(x,a_i) \leq d(x,a_j) \, \forall j\neq i\}$ is the Voronoi cell associated to $a_i$. Note that if the support of $\mu$ has at least $n$ elements, one easily checks that $q$ can always be improved by adding an element to its image, and so the $a_i$'s are pairwise distinct. The optimal quantization problem can therefore be expressed as follows.
\begin{problem}
Given $\mu$, find $n$ points $a_1,\hdots,a_n$ that realize the infimum in
\begin{equation*}
e_{n,p}(\mu) = \inf_{a_1,\hdots,a_n}\mathbb E_\mu\left( \min_{1\leq i\leq n} d(X,a_i)^p\right).
\end{equation*}
\end{problem}\label{pb1}
Note that if $n=1$, the solution of Problem \ref{pb1} is simply the Riemannian center of mass. Just as in the vector case \cite{graf}, Problem \ref{pb1} has an equivalent formulation in terms of the $L^p$ Wasserstein distance $W_p$.
\begin{problem}\label{pb2}
Given $\mu$, find a discrete measure $\mu_n$ supported by $n$ points that minimizes the $L^p$ Wasserstein distance to $\mu$
\begin{equation*}
e_{n,p}(\mu) = \inf_{|\text{supp}\,\mu_n|= n} W_p(\mu,\mu_n).
\end{equation*}
\end{problem}
The solution $\mu_n$ of Problem \ref{pb2} is simply the image measure of $\mu$ by the optimal mapping \eqref{optq} of Problem \ref{pb1}, i.e. the atoms of $\mu_n$ are the points $a_1,\hdots,a_n$ of the image of $q$ and their weights are given by the $\mu$-mass of their Voronoi cells
\begin{equation*}
\mu_n = \sum_{i=1}^n\mu(C_i) \delta_{a_i}.
\end{equation*}

\section{Competitive Learning Riemannian quantization}
Of Problems \ref{pb1} and \ref{pb2}, the first one is the easiest to work with and so we focus on finding a numerical scheme to compute the minimizers of
\begin{equation*}
F_{n,p}(a_1,\hdots,a_n)=\mathbb E_\mu\left( \min_{1\leq i\leq n} d(X,a_i)^p\right).
\end{equation*}
Since we have assumed that $\mu$ has compact support, the existence of a minimizer $\alpha=(a_1,\hdots,a_n)$ is easily obtained \cite{lebrigant}. The minimizer $\alpha$ is in general not unique, first of all because any permutation of $\alpha$ is still a minimizer, and secondly because any symmetry of $\mu$, if it exists, will transform $\alpha$ into another minimizer of $F_{n,p}$ (consider e.g. the uniform distribution on the sphere). Just as in the vector case, the cost function $F_{n,p}$ is differentiable and its gradient is given, for $p=2$ (a similar expression is easily obtained for any $p$), by \cite{lebrigant}
\begin{equation}\label{grad}
\nabla_\alpha F_{n,2} = -2\left( \mathbb E_\mu \mathbf{1}_{\{X\in \mathring{C_i}\}} \overrightarrow{a_iX}\right)_{1\leq i\leq n}.
\end{equation}
Note that any minimizer $\alpha$ verifies $\int_{\mathring{C_i}} \overrightarrow{a_ix}\,\mu(dx) = 0$ for all $i$ and so each $a_i$ is the $\mu$-center of mass of its Voronoi cell $C_i$. \emph{Competitive Learning Quantization} is a stochastic gradient descent method to compute a minimizer $(a_1,\hdots,a_n)$ of $F_{n,2}$ using an online sequence of i.i.d. observations $X_1, X_2, \hdots$ sampled from $\mu$. At each step $k$, the algorithm receives an observation $X_k$ and the current points $a_1(k),\hdots,a_n(k)$ are moved in the opposite direction of the gradient, which is approximated by
\begin{equation*}
\left(\mathbf{1}_{\{X_k\in \mathring{C_i}\}} \overrightarrow{a_i(k)X_k}\right)_{1\leq i\leq n}.
\end{equation*}
Therefore, the only point $a_i(k)$ to be moved at step $k$ is the closest neighbor, among all the $a_j(k)$'s, of the new observation $X_k$, and it is updated in the direction of that new observation using a step $\gamma_k$
\begin{equation*}
a_i(k+1) = \exp_{a_i(k)}\left(\gamma_k \overrightarrow{a_i(k)X_k}\right).
\end{equation*}
The other $a_j(k)$'s stay unchanged. Using a theorem from Bonnabel \cite{bonnabel2013}, this algorithm can be shown to converge \cite{lebrigant}. Applied to $N$ data points $x_1,\hdots, x_N$, it yields an optimal approximation (or summary) of their empirical measure $\mu=\frac{1}{N}\sum_{i=1}^N \delta_{x_i}$ by a discrete measure supported by a smaller number $n \ll N$ of points $a_1,\hdots,a_n$, which correspond to the centers of a $K$-means type clustering.

\section{Application to air traffic analysis}

In air traffic analysis, Competitive Learning Riemannian Quantization can be used to compute summaries of images of covariance matrices estimated from air traffic images \cite{lebrigant}. These summaries are representative of the air traffic complexity and yield clusterings of the airspaces into zones that are homogeneous with respect to that criterion, as shown in Figure \ref{fig:france}. They can then be compared using discrete optimal transport and be further used as inputs of a machine learning algorithm or as indexes in a traffic database.

\begin{figure}[H]
\includegraphics[width=0.3\textwidth,height=0.3\textwidth]{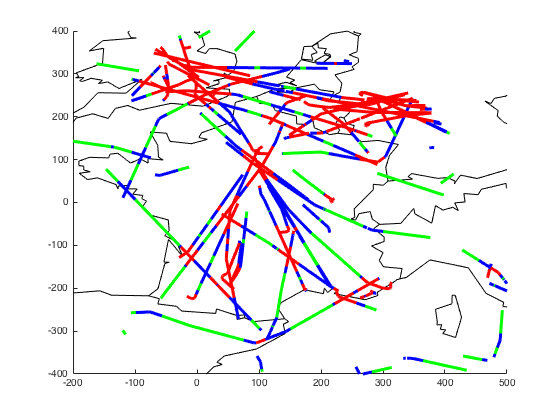}
\includegraphics[width=0.3\textwidth,height=0.3\textwidth]{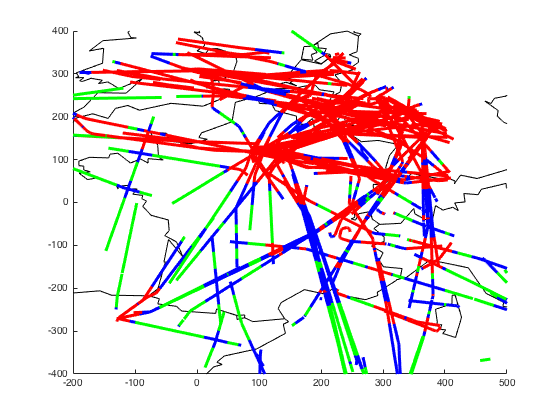}
\includegraphics[width=0.3\textwidth,height=0.3\textwidth]{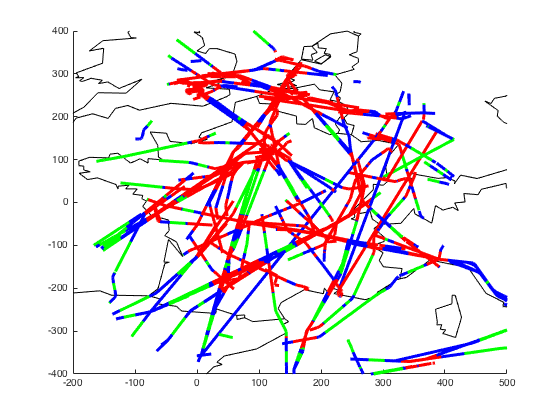}
\caption{Clustering of the French airspace using Competitive Learning Riemannian Quantization.}
\label{fig:france}
\end{figure}

\end{talk}

\begin{talk}[Klas Modin]{Stephen Marsland}
{Horizontal stochastic gradient flows for the Hopf fibration}
{Marsland, Stephen}

\section{Introduction}

We use numerical computations to explore horizontal Brownian motion and stochastic gradient flows relative to the Hopf fibration. The study of this problem is motivated by an infinite-dimensional analogue, where the Hopf fibration is replaced by Moser's fibration of  $\operatorname{Diff}(M)$ as an $\operatorname{SDiff}(M)$ 
bundle over the space  $\operatorname{Dens}(M)$ of probability densities.
  
\section{Stochastic motion on  $\mathcal{S}^2$}

Our horizontal Brownian motion on the unit quaternions  $\mathcal{S}^3$ is generated by horizontal lifting of a uniform Brownian motion on $\mathcal{S}^2$. Thus, the first step is to generate a (standard) Brownian motion on $\mathcal{S}^2$.

The form of our stochastic flow is
\begin{equation}
    d \mathbf{r} = - \nabla f(\mathbf{r} dt + dw
\end{equation}
where $f \in \mathcal{S}^2 \subset \mathbb{R}^3$ and $f$ is a $C^2$ function on $S^2$.

We start by generating the gradient function and the noise. 
Notice that we generate the noise as standard Gaussian in $\mathbb{R}^3$.
The corresponding noise vector at $T_\mathbf r S^2$ is given by orthogonal projection onto the plane defined by $\mathbf r$.
 
 We can now generate the stochastic motion on $\mathcal{S}^2$. For simplicity we use Euler's explicit method along geodesics (great circles). Initial positions are given by the north pole $\mathbf r_0 = (0,0,1)$.
 
 The points are initially clustered around the origin (north pole), but tend to spread out to fill the entire sphere. 
To understand the diffusion better we compute the distance of each of the points to the north pole, and we look at the corresponding time series.

For small $t$ we expect the average to approximately behave as $\sqrt{t}$ since $S^2$ is locally an approximation of $\mathbb{R}^2$, and this is confirmed in Figure~\ref{fig:curve1}.

 We also compute a uniform random walk on $\mathcal{S}^3$ to show that these cover more of the surface, as can be seen in Figure~\ref{fig:fills}.

\begin{figure}[h]
    \centering
    \includegraphics[scale=0.5]{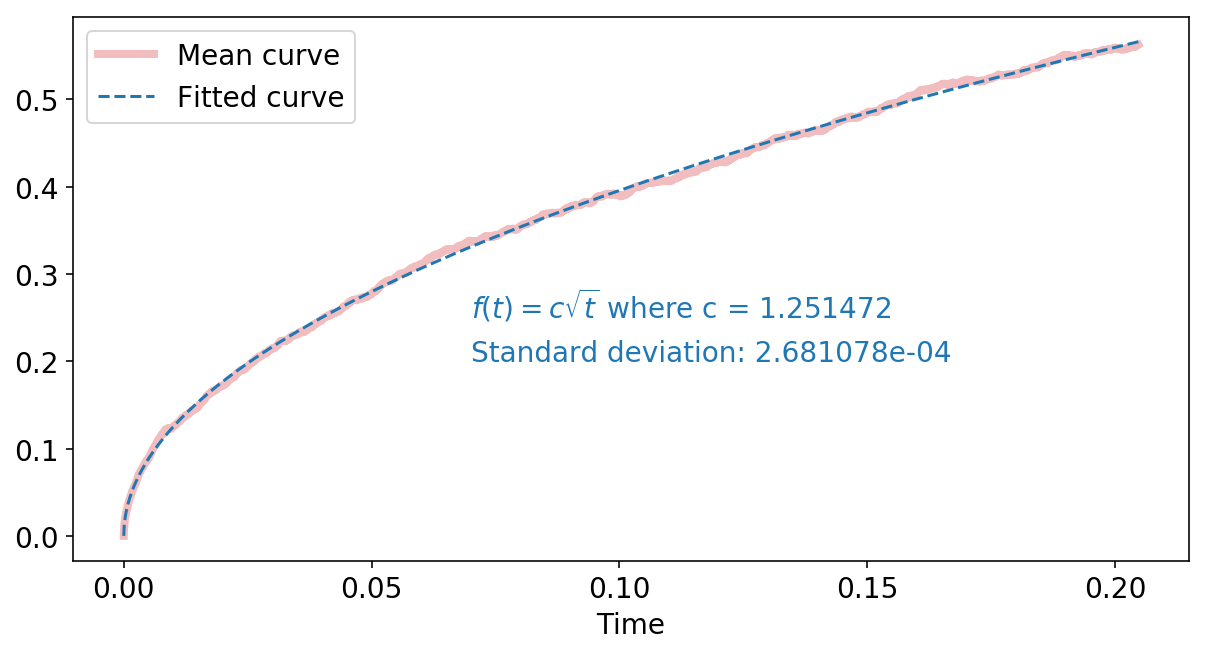}
    \caption{The average curve looks very much like $\sqrt t$; best fit and model are shown.}
    \label{fig:curve1}
\end{figure}
 
 \begin{figure}[h]
    \centering
    \includegraphics[scale=0.25]{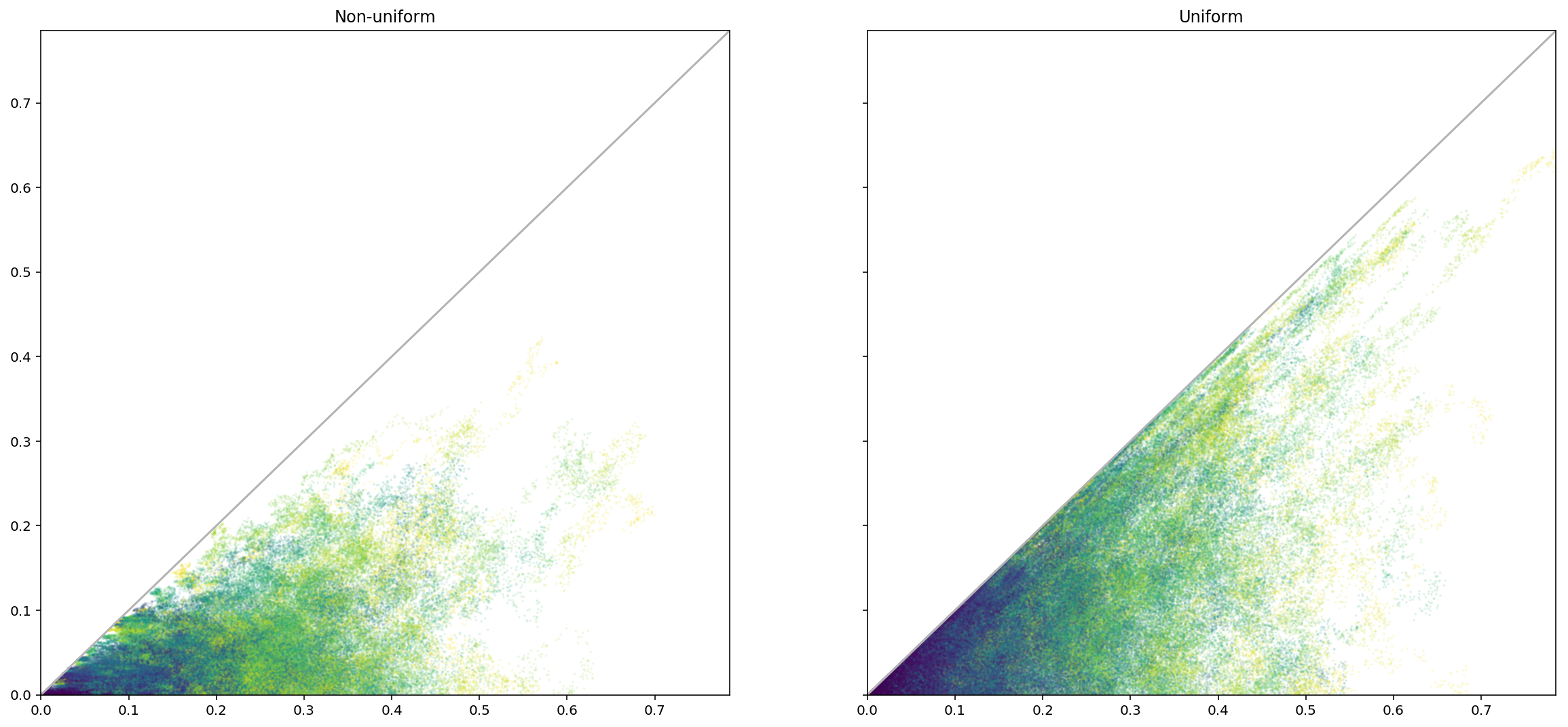}
    \caption{Scatter plots of the coverage of the sphere for 100 random walks for the non-uniform (left) and uniform (right) travelling up to $\pi/4$ from the origin.}
    \label{fig:fills}
\end{figure}

\end{talk}

\clearpage

\begin{talk}{Peter Michor}{All diffeomorphism invariant tensor fields on the space of smooth positive densities on a compact manifold with corners}{Michor, Peter W.}

\section{Introduction} 

The Fisher--Rao metric on the space $\operatorname{Prob}(M)$ of probability densities is invariant under the action of the diffeomorphism group $\operatorname{Diff}(M)$.
Restricted to finite-dimensional submanifolds of $\operatorname{Prob}(M)$, 
so-called statistical manifolds, it is called Fisher's information metric \cite{Ama1985}. 
A uniqueness result was established \cite[p. 156]{Cen1982} for Fisher's information 
metric on finite sample spaces and \cite{AJLS2014} extended it to infinite sample spaces.       
The Fisher--Rao metric on the infinite-dimensional manifold of all positive probability densities 
was studied in \cite{Fri1991}, including the computation of its curvature. 
In \cite{BBM2016} it was proved that the Fisher--Rao metric on $\operatorname{Prob}(M)$ is, up to a multiplicative constant, the unique $\operatorname{Diff}(M)$-invariant metric, on a compact manifold without boundary. In fact, all $\operatorname{Diff}(M)$-invariant bilinear tensor fields on the space $\operatorname{Dens}_+(M)$ of all positive smooth densities were determined. Here we try to extend this result to compact smooth manifolds with corners and we also determine all $\operatorname{Diff}(M)$-invariant tensor fields on $\operatorname{Dens}_+(M)$ of all orders. 
The changes required in the proof are quite subtle.

\subsection{The Fisher--Rao metric}
Let $M^m$ be a smooth compact connected manifold without boundary. 
Let $\operatorname{Vol}(M)\to M$ be the 
the line bundle of smooth densities whose cocycle of transition functions is 
$|\det(d(u_a\circ u_b^{-1}))|^{-1}\circ u_b$ for any smooth atlas $(u_a:U_a \to \mathbb R^m)_{a\in A}$;  for more details 
 we refer to \cite{BBM2016}. Moreover, we denote by $|\quad|:\Lambda^mT^*M\to \operatorname{Vol}(M)$ the fiber respecting absolute value mapping. 
We let $\operatorname{Dens}_+(M)$ denote the space of smooth positive densities on $M$, i.e., 
$\operatorname{Dens}_+(M) = 
\{ \mu \in \Gamma(\operatorname{Vol}(M)) \,:\, \mu(x) > 0\; \forall x \in M\}$. Let $\operatorname{Prob}(M)$ be the 
subspace of positive densities with integral 1 on $M$. Both spaces are smooth Fr\'echet manifolds, 
in particular they are open subsets of the affine spaces of all densities or densities of integral 
1, respectively. For $\mu \in \operatorname{Dens}_+(M)$ we have    
$ T_\mu \operatorname{Dens}_+(M) = \Gamma(\operatorname{Vol}(M))$ and for $\mu\in \operatorname{Prob}(M)$ we have 
$$
T_\mu\operatorname{Prob}(M)=\{\alpha\in \Gamma(\operatorname{Vol}(M)): \int_M\alpha =0\}.
$$
The Fisher--Rao metric is a Riemannian metric on $\operatorname{Prob}(M)$ and is defined as follows:
$$
G^{\operatorname{FR}}_\mu(\alpha,\beta) = \int_M \frac{\alpha}{\mu}\frac{\beta}{\mu}\mu.
$$
This metric is invariant under the associated action of $\operatorname{Diff}(M)$ on $\operatorname{Prob}(M)$, since
$$
\Big((\varphi^*)^*G^{\operatorname{FR}}\Big)_\mu(\alpha,\beta) = G^{\operatorname{FR}}_{\varphi^*\mu}(\varphi^*\alpha,\varphi^*\beta) 
= \int_M \Big(\frac{\alpha}{\mu}\circ \varphi\Big)\Big(\frac{\beta}{\mu}\circ \varphi\Big)\varphi^*\mu
= \int_M \frac{\alpha}{\mu}\frac{\beta}{\mu}\mu\,.
$$

The uniqueness result for the Fisher--Rao metric follows from the following classification of 
$\operatorname{Diff}(M)$-invariant bilinear forms on $\operatorname{Dens}_+(M)$.

\begin{theorem*} {\rm \cite{BBM2016}} Let $M$ be a compact manifold without boundary of dimension $\geq 2$.
Let $G$ be a smooth (equivalently, bounded) bilinear form on $\operatorname{Dens}_+(M)$ which is invariant under the action of 
$\operatorname{Diff}(M)$. Then 
$$
G_\mu(\alpha,\beta)=C_1(\mu(M)) \int_M \frac{\alpha}{\mu}\frac{\beta}{\mu}\,\mu + C_2(\mu(M))  \int_M\alpha \cdot \int_M\beta
$$
for some smooth functions $C_1,C_2$ of the total volume $\mu(M)$. 
\end{theorem*}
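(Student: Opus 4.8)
The plan is to exploit the rigidity of $\operatorname{Diff}(M)$-invariance to pin down $G_\mu$ at each fixed $\mu$, and then to use transitivity to see that the remaining freedom is a pair of functions of the total volume. First I would represent the form by a kernel: since $T_\mu\operatorname{Dens}_+(M)=\Gamma(\operatorname{Vol}(M))$ is a nuclear Fréchet space and $G_\mu$ is bounded, the Schwartz kernel theorem yields a generalized function $K_\mu$ on $M\times M$ with $G_\mu(\alpha,\beta)=\langle K_\mu,\alpha\boxtimes\beta\rangle$. The invariance $G_{\varphi^*\mu}(\varphi^*\alpha,\varphi^*\beta)=G_\mu(\alpha,\beta)$ becomes an equivariance of $\mu\mapsto K_\mu$ under the diagonal action, and I would analyze it at a fixed $\mu$ via the isotropy group $\operatorname{Diff}(M,\mu)$ of $\mu$-preserving diffeomorphisms, under which $K_\mu$ is genuinely invariant.

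Second, the off-diagonal part. Fix $\beta$ supported in an open set $V$ and restrict $\alpha\mapsto G_\mu(\alpha,\beta)$ to densities supported in $M\setminus V$. For every $\psi\in\operatorname{Diff}(M,\mu)$ equal to the identity near $V$ we have $\psi^*\beta=\beta$ and $\psi^*\mu=\mu$, so this linear functional is invariant under all $\mu$-preserving diffeomorphisms compactly supported in $M\setminus V$. Because $\dim M\geq 2$ these act with enough transitivity that the only invariant functionals are multiples of $\alpha\mapsto\int_M\alpha$; this is precisely where dimension two is essential (in dimension one the $\mu$-preserving group is merely the rotations and the conclusion fails). Running the same argument in $\beta$ identifies the off-diagonal contribution with $c_2(\mu)\int_M\alpha\int_M\beta$, so that $K_\mu-c_2(\mu)(\mathbf 1\boxtimes\mathbf 1)$ is supported on the diagonal.

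Third, the diagonal part, which I expect to be the main obstacle. A continuous kernel supported on the diagonal is a bidifferential operator of finite order, so locally $G_\mu(\alpha,\beta)=\int_M L\bigl(\tfrac\alpha\mu,\tfrac\beta\mu\bigr)\,\mu$. In $\mu$-adapted coordinates its symbol at a point must be invariant under the point-isotropy of $\operatorname{Diff}(M,\mu)$, whose linear part is $SL^{\pm}(m)=\{A:\ |\det A|=1\}$. The zeroth-order symbol is an arbitrary scalar, giving exactly the Fisher--Rao integrand $\int_M\frac\alpha\mu\frac\beta\mu\,\mu$. The difficulty is ruling out all higher-order symbols. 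The antisymmetric first-order candidate $\int_M d(\alpha/\mu)\wedge d(\beta/\mu)$ is killed because the orientation-reversing elements of $SL^{\pm}(m)$ flip its sign; but the linear isotropy alone is insufficient in low dimension: for $m=2$ one has $V\cong V^*$, and a symmetric second-order symbol built from two copies of the area form is already $SL^{\pm}(2)$-invariant. To eliminate it one must invoke the full nonlinear isotropy, i.e. the higher jets of volume-preserving diffeomorphisms fixing the point, under which such a symbol generates uncompensated lower-order terms and is forced to vanish. Carrying out this jet-group computation at every order and every $m\geq 2$ is the technical heart, and is exactly the part whose adaptation to corners the paper flags as subtle.

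Finally I would reinstate the $\mu$-dependence. The above gives $G_\mu(\alpha,\beta)=c_1(\mu)\int_M\frac\alpha\mu\frac\beta\mu\,\mu+c_2(\mu)\int_M\alpha\int_M\beta$ at every $\mu$. By Moser's theorem $\operatorname{Diff}(M)$ acts transitively on each level set $\{\mu(M)=\text{const}\}$, so invariance forces $c_1$ and $c_2$ to be constant along orbits, hence functions of $\mu(M)$ alone; smoothness (equivalently boundedness) of $G$ in $\mu$ then upgrades them to smooth functions $C_1,C_2$, completing the proof.
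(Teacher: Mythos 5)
Your outline reproduces the architecture of the proof in \cite{BBM2016} (this note only quotes the theorem from there): Schwartz kernel, reduction to the isotropy group $\operatorname{Diff}(M,\mu)$ of a fixed density, rigidity of the off-diagonal part via compactly supported $\mu$-preserving diffeomorphisms (where $\dim M\geq 2$ indeed enters), a remainder supported on the diagonal, and finally Moser's theorem plus smoothness along rays $c\mapsto c\mu_0$ to produce $C_1,C_2$ as smooth functions of $\mu(M)$. Steps one, two and four are sound as sketched (modulo routine patching of the off-diagonal constants over components and choices of $V$). The genuine gap is your third step: the classification of the diagonal-supported part, which is the actual mathematical content of the theorem, is not carried out --- you yourself label the ``jet-group computation at every order and every $m\geq 2$'' as the technical heart and leave it there. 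As it stands the proposal proves the easy half and asserts the hard half.

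Moreover, the obstacle you invoke to justify that a full higher-jet analysis is unavoidable is spurious, and seeing why points to the standard repair. In $m=2$ the symbol $\omega^{ik}\omega^{jl}\,\partial_i\partial_j f\,\partial_k\partial_l g$ is indeed $SL^{\pm}(2)$-invariant and symmetric, but the bilinear \emph{functional} it defines vanishes identically: in a chart where $\mu$ is standard, two integrations by parts move all derivatives onto $g$, and the contraction of the antisymmetric $\omega^{ik}$ with the totally symmetric $\partial_i\partial_j\partial_k\partial_l g$ is zero. The confusion is between symbols and functionals --- the symbol representation of a diagonal-supported kernel is not unique precisely because of integration by parts. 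The remedy is to fix the normal form first: write the diagonal part as $T(\alpha,\beta)=\int_M \frac{\alpha}{\mu}\,L\bigl(\frac{\beta}{\mu}\bigr)\,\mu$ with all derivatives moved onto one argument, where $L$ is a differential operator (with a priori distributional coefficients). Invariance of the functional together with $\psi^*\mu=\mu$ forces the intertwining relation $L(g\circ\psi)=L(g)\circ\psi$ for all $\psi\in\operatorname{Diff}(M,\mu)$; the leading coefficient of $L$ then transforms as a field of symmetric contravariant $k$-tensors, invariant at each point under the linear isotropy $SL^{\pm}(m)$, and $S^k(\mathbb{R}^m)$ has no nonzero $SL(m)$-fixed vector for $k\geq 1$, $m\geq 2$. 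So the order of $L$ drops by downward induction to zero, and transitivity of $\operatorname{Diff}(M,\mu)$ makes the remaining coefficient a constant, giving exactly $C_1\int_M\frac{\alpha}{\mu}\frac{\beta}{\mu}\,\mu$. With that replacement --- linear isotropy plus normal form, no higher jets --- your outline closes up into a complete proof.
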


\subsection{Acknowledgments} We thank Boris Kruglikov, Valentin Lychagin, Philipp Harms, Armin Rainer, Andreas Kriegl for discussions and hints.  

\section{Manifolds with corners}

\subsection{Manifolds with corners alias quadrantic (orthantic) manifolds}
For more information we refer to \cite{DouadyHerault73}, \cite{Michor80}, \cite{Melrose96}, etc.
Let $Q=Q^m=\mathbb R^m_{\ge 0}$ be the positive orthant or quadrant. By Whitney's extension theorem or Seeley's theorem,
restriction $C^{\infty}(\mathbb R^m)\to C^{\infty}(Q)$ is a surjective continuous linear mapping which admits a continuous linear section (extension mapping); so $C^{\infty}(Q)$ is a direct summand in $C^{\infty}(\mathbb R^m)$. A point $x\in Q$ is called a \emph{corner of codimension} $q>0$ if $x$ lies in the intersection of $q$ distinct coordinate hyperplanes. Let $\partial^q Q$ denote the set of all corners of codimension $q$.

A manifold with corners (recently also called a quadrantic manifold) $M$ 
is a smooth manifold modelled on open subsets of $Q^m$.
We assume that it is connected and second countable; then it is paracompact and for each open cover it admits a subordinated smooth partition of unity. Any manifold with corners $M$ is a submanifold with corners of an open manifold $\tilde M$ of the same dimension, and each smooth function on $M$ extends to a smooth function on $\tilde M$. Moreover,  restriction $C^\infty(\tilde M)\to C^\infty(M)$ is a surjective continuous linear map which admits a continuous linear section; this follows by  gluing via a smooth partition of unity  from the result about quadrants.Thus $C^\infty(M)$ is a topological direct summand in 
$C^\infty(\tilde M)$ and the same holds for the dual spaces: The space of distributions 
$\mathcal D'(M)$, which we identity with $C^\infty(M)'$ in this paper, is a direct summand in 
$\mathcal D'(\tilde M)$. It consists of all distributions with support in $M$.

We do not assume that $M$ is oriented, but eventually we will assume that $M$ is compact. 
Diffeomorphisms of $M$ map the boundary $\partial M$ to itself and map  the boundary $\partial^q M$ of corners of codimension $q$ to itself; $\partial^q M$ is a submanifold of codimension $q$ in $M$; in general $\partial^q M$ has finitely many connected components. We shall consider $\partial M$ as stratified into the connected components of all $\partial^q M$ for $q>0$.

Each diffeomorphism of $M$ restricts to a diffeomorphism of $\partial M$ and to a diffeomorphism of each $\partial^q M$. The Lie 
algebra of $\operatorname{Diff}(M)$ consists of all vector fields $X$ on $M$ such that $X|\partial^q M$ is tangent to 
$\partial^q M$. We shall denote this Lie algebra by $\mathfrak X(M,\partial M)$.

\subsection{Differential forms}
There are several differential complexes on a manifold with corners.  
If $M$ is not compact there are also the versions with compact support. 
\begin{itemize}
\item Differential forms that vanish near $\partial M$. If $M$ is compact, this is the same as
the differential complex $\Omega_c(M\setminus \partial M)$ of differential forms with compact support 
in the open interior $M\setminus \partial M$. 
\item $\Omega(M,\partial M) = \{\alpha\in \Omega(M): j_{\partial^q M}^*\alpha =0 \text{ for all } q\ge 1\}$, the complex of differential forms that pull back to 0 on each boundary stratum. 
\item $\Omega(M)$, the complex of all differential forms. Its cohomology equals 
singular cohomology with real coefficients of $M$, since $\mathbb R\to \Omega^0\to \Omega^1\to \dots$
is a fine resolution of the constant sheaf on $M$; for that one needs existence of smooth partitions of unity and the Poincar\'e lemma which holds on manifolds with corners.
The Poincar\'e lemma can be proved as in \cite[9.10]{Mic2008} in each quadrant.
\end{itemize}
If $M$ is an oriented manifold with corners of dimension $m$ and if $\mu\in \Omega^m(M)$ is a nowhere vanishing form of top degree, then $\mathfrak X(M)\ni X\mapsto i_X\mu\in \Omega^{m-1}(M)$ is a linear isomorphism. 
Moreover, $X\in \mathfrak X(M,\partial M)$ (tangent to the boundary) if and only if $i_X\mu\in\Omega^{m-1}(M,\partial M)$.

Let us consider the short exact sequence of differential graded algebras
$$
0\to \Omega(M,\partial M) \to \Omega(M) \to \Omega(M)/\Omega(M,\partial M)\to 0\,.
$$
The complex $\Omega(M)/\Omega(M,\partial M)$ is a subcomplex of the product  of $\Omega(N)$ for all connected components $N$ of all 
$\partial^q M$. The quotient consists of forms which extend continuously over boundaries to $\partial M$ with its induced topology in such a way that one can extend them to smooth forms on $M$; this is contained in the space of `stratified forms' as used in \cite{Valette15}. There Stokes' formula is proved for stratified forms.

\begin{proposition}[Stokes' theorem] \label{Stokes}
For a connected oriented manifold $M$ with corners of dimension $\dim(M)=m$ and for any $\omega\in\Omega^{m-1}_c(M)$ we have
$$
\int_M d\omega = \int_{\partial^1M} j_{\partial^1 M}^*\omega\,.
$$
\end{proposition}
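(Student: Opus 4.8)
The plan is to prove the identity first in the flat local model $Q^m=\R^m_{\ge 0}$ and then globalize by a partition of unity. Since $M$ is paracompact and admits smooth partitions of unity subordinate to any open cover, I would choose a cover by orientation-preserving charts $u_a\colon U_a\to Q^m$ (using that $M$ is oriented) together with a subordinate partition of unity $\{\rho_a\}$. Because $\omega$ has compact support, only finitely many $\rho_a$ are nonzero on $\operatorname{supp}(\omega)$, so $\omega=\sum_a\rho_a\omega$ is a finite sum of $(m-1)$-forms each compactly supported in a single chart. Both sides of the asserted identity are linear in $\omega$, $d$ commutes with pullback, and the induced boundary orientation is preserved by the (orientation-preserving, boundary-respecting) transition maps, so it suffices to establish the formula for a form supported in one chart. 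Transporting by $u_a$ and extending by zero, I am reduced to proving, for $\omega\in\Omega^{m-1}_c(Q^m)$, that $\int_{Q^m}d\omega=\int_{\partial^1 Q^m}j^*\omega$, where $\partial^1 Q^m=\bigcup_i\{x^i=0\}$ with the corners of higher codimension removed.

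For the local computation I would write $\omega=\sum_{i=1}^m f_i\,dx^1\wedge\cdots\wedge\widehat{dx^i}\wedge\cdots\wedge dx^m$ with $f_i\in C^\infty_c(Q^m)$, so that
$$
d\omega=\sum_{i=1}^m(-1)^{i-1}\frac{\partial f_i}{\partial x^i}\,dx^1\wedge\cdots\wedge dx^m.
$$
By Fubini I integrate each summand first in the variable $x^i$ over $[0,\infty)$; the fundamental theorem of calculus together with compactness of the support kills the contribution as $x^i\to+\infty$ and leaves only the value at $x^i=0$:
$$
\int_0^\infty\frac{\partial f_i}{\partial x^i}\,dx^i=-f_i\big|_{x^i=0}.
$$
Hence $\int_{Q^m}d\omega=\sum_i(-1)^i\int_{\{x^i=0\}}f_i|_{x^i=0}\,dx^1\cdots\widehat{dx^i}\cdots dx^m$, the integrals on the right taken with the coordinate orientation of the hyperplane.

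It remains to match this with the boundary integral. On the face $F_i=\{x^i=0\}$ one has $j_{F_i}^*dx^i=0$, so only the $i$-th summand of $\omega$ survives and $j_{F_i}^*\omega=f_i|_{x^i=0}\,dx^1\wedge\cdots\wedge\widehat{dx^i}\wedge\cdots\wedge dx^m$. The induced (outward-normal-first) orientation on $F_i$ differs from the coordinate orientation by the sign $(-1)^i$, since the outward conormal is $-dx^i$ and $(-dx^i)\wedge\big((-1)^i\,dx^1\wedge\cdots\widehat{dx^i}\cdots\big)=dx^1\wedge\cdots\wedge dx^m$. Summing over the faces therefore yields exactly $\sum_i(-1)^i\int_{\{x^i=0\}}f_i|_{x^i=0}\,dx^1\cdots\widehat{dx^i}\cdots$, which agrees with the expression for $\int_{Q^m}d\omega$. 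The corners $\partial^q Q^m$ with $q\ge 2$ lie in finitely many affine subspaces of dimension $\le m-2$ and are thus $(m-1)$-dimensional null sets, so they contribute nothing to $\int_{\partial^1 Q^m}$; the same observation shows the partition-of-unity pieces assemble consistently, since overlaps of distinct faces are negligible.

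The Fubini step and the sign computation are routine. The step I expect to require the most care is the orientation bookkeeping at the corners: one must verify that the induced orientations on the codimension-one faces coming from different charts agree on overlaps, so that $\int_{\partial^1 M}$ is well defined and the local contributions add up globally, and that the codimension $\ge 2$ strata along which faces meet genuinely drop out. This is precisely where passing from a boundary to honest corners becomes delicate, matching the paper's remark that the required changes are subtle.
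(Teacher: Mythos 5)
Your proof is correct and takes essentially the same route as the paper's: the paper itself gives no argument but defers to the short proof in the cited Bruveris--Michor--Parusi\'nski--Rainer reference, which likewise reduces via a partition of unity subordinate to quadrant charts to the model case $Q^m=\mathbb{R}^m_{\ge 0}$ and there applies Fubini, the fundamental theorem of calculus, and the $(-1)^i$ orientation count on the faces $\{x^i=0\}$, the codimension $\ge 2$ strata being $(m-1)$-dimensional null sets. Your sign computation and the consistency of the induced face orientations across charts are exactly right, so there is nothing to add.
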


See \cite{BMPR18} for a short proof.

\subsection{Top cohomology of the pair \texorpdfstring{$(M,\partial M)$}{(M, dM)}}
For a connected oriented manifold with corners $M$ of dimension $m$ (we assume that $\partial M$ is not empty) we consider the following diagram; see \cite[section 8]{BMPR18} for the simple proofs. 
Here $\omega\in\Omega^m_c(M\setminus \partial M)$ is a fixed form with $\int\omega = 1$.
All instances of $\mathbb R$ in the diagram are connected by identities which fit commutingly into the diagram.
Each line is the definition of  the corresponding top de~Rham cohomology space. 
The integral in the first line induces an isomorphism in cohomology since 
$M\setminus \partial M$ is a connected oriented open manifold.
The bottom triangle commutes by Stokes' theorem \ref{Stokes}.
$$
\xymatrix{
\Omega^{m-1}_c(M\setminus \partial M) \ar[r]^{d} \ar@{^{(}->}[d] & 
\Omega^m_c(M\setminus \partial M) \ar@/^1.5pc/[rr]^{\int_{M\setminus \partial M}} \ar@{->>}[r]  \ar@{^{(}->}[d] &
H^m_c(M\setminus \partial M) \ar@{=}[r]  \ar[d]& \mathbb R
\\
\Omega^{m-1}_c(M, \partial M) \ar[r]^{d} \ar@{^{(}->}[d] & 
\Omega^m_c(M,\partial M) \ar@{->>}[r] \ar@{=}[d] \ar@/_1.5pc/[rr]_{\qquad\qquad\qquad\qquad\quad\int_M} &
H^m_c(M,\partial M) \ar@{=}[r] \ar[d]&  \mathbb R
\\
\Omega^{m-1}_c(M) \ar[r]^{d} \ar[dr]_{\int_{\partial^1 M}\circ j_{\partial^1 M}^*} & 
\Omega^m_c(M)  \ar@{->>}[r] \ar[d]^{\int_M}  &
H^m_c(M) \ar@{=}[r]& 0
\\
& \mathbb R  
}
$$

\begin{theorem} {\rm (\cite{BMPR18} Moser's theorem for manifolds with corners)}\label{Moser}
Let $M$ be a compact smooth manifold with corners, possibly non-orientable. 
Let $\mu_0$ and $\mu_1$ be two smooth positive densities in 
$\operatorname{Dens}_+(M)$ with $\int_M\mu_0 = \int_M\mu_1$.
Then there exists a diffeomorphism $\varphi:M\to M$ such that 
$\mu_1= \varphi^*\mu_0$. If  and only if $\mu_0(x)=\mu_1(x)$ for each corner $x\in\partial^{\ge 2}M$ 
of codimension $\ge 2$, then $\varphi$ can be chosen to be the identity on $\partial M$. 
\end{theorem}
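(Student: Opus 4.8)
The plan is to run Moser's deformation argument, using the relative de~Rham theory of the pair $(M,\partial M)$ recorded above to control the behaviour at the boundary, and to dispose of non-orientability by passing to the orientation double cover. First I would reduce to the orientable case. Let $p\colon\hat M\to M$ be the orientation double cover with deck involution $\sigma$; it is again a compact manifold with corners, and $\sigma$ permutes the strata. The densities pull back to positive densities $p^*\mu_i$ which, using the orientation of $\hat M$, are genuine top-degree forms $\hat\mu_i\in\Omega^m(\hat M)$ that are $\sigma$-anti-invariant and satisfy $\int_{\hat M}\hat\mu_0=\int_{\hat M}\hat\mu_1$; a $\sigma$-equivariant solution on $\hat M$ descends to $M$. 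So I work on $\hat M$ and suppress hats (one may instead work throughout in the orientation-twisted de~Rham complex, in which densities are top forms already on $M$). Set $\mu_\tau=(1-\tau)\mu_0+\tau\mu_1$, which is a smooth path in $\operatorname{Dens}_+(M)$ by positivity, and $\nu:=\mu_0-\mu_1$. I seek a time-dependent $X_\tau\in\mathfrak X(M,\partial M)$ whose flow $\varphi_\tau$ satisfies $\varphi_\tau^*\mu_\tau=\mu_0$; differentiating, this is equivalent to $\mathcal L_{X_\tau}\mu_\tau=\nu$, and since $\mu_\tau$ has top degree Cartan's formula reduces it to $d(i_{X_\tau}\mu_\tau)=\nu$.

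Because $\nu$ is independent of $\tau$, it suffices to find a single $\beta\in\Omega^{m-1}(M,\partial M)$ with $d\beta=\nu$ and then define $X_\tau$ by $i_{X_\tau}\mu_\tau=\beta$; this is possible and smooth in $\tau$ since $\mu_\tau$ is a nowhere-vanishing top form, and the stated equivalence $X\in\mathfrak X(M,\partial M)\Leftrightarrow i_X\mu\in\Omega^{m-1}(M,\partial M)$ guarantees $X_\tau$ is tangent to every stratum. The top-degree form $\nu$ lies in $\Omega^m(M,\partial M)$ automatically (it pulls back to $0$ on each positive-codimension stratum for dimensional reasons), and $\int_M\nu=\int_M\mu_0-\int_M\mu_1=0$; by the computation of the top relative cohomology recorded in the diagram above, $\int_M\colon H^m_c(M,\partial M)\xrightarrow{\ \cong\ }\mathbb R$, so $[\nu]=0$ and hence $\nu=d\beta$ for such a $\beta$ (chosen $\sigma$-anti-invariant by averaging over $\sigma$, so that $X_\tau$ is $\sigma$-invariant and descends). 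Compactness makes the flow of $X_\tau$ complete on $[0,1]$, and since $X_\tau$ is tangent to all strata its flow preserves $M$ and each $\partial^q M$. Taking $\varphi:=\varphi_1^{-1}$ yields a diffeomorphism with $\varphi^*\mu_0=\mu_1$, which proves the first assertion.

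For the boundary-identity statement, necessity is a pointwise differential computation. If $\varphi|_{\partial M}=\mathrm{id}$ and $x\in\partial^q M$ with $q\ge 2$, then $\varphi$ fixes each of the $q$ coordinate faces through $x$ pointwise, so $d\varphi_x$ is the identity on the tangent space of each such face. Since $q\ge 2$, every coordinate direction is tangent to at least one of these faces, whence $d\varphi_x=\mathrm{id}$ and $\operatorname{Jac}_x\varphi=1$; as $\varphi(x)=x$ this forces $\mu_1(x)=(\varphi^*\mu_0)(x)=\mu_0(x)$. At a codimension-one point only $m-1$ directions are constrained, the normal Jacobian is free, and no condition appears—consistent with Moser imposing no matching on $\partial^1 M$.

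For sufficiency I must upgrade the construction so that $X_\tau$ vanishes on all of $\partial M$, i.e. so that $\beta$ vanishes pointwise as a full covector (not merely tangentially) along $\partial M$; then $i_{X_\tau}\mu_\tau=\beta=0$ on $\partial M$ forces $X_\tau|_{\partial M}=0$, its flow fixes $\partial M$ pointwise, and $\varphi|_{\partial M}=\mathrm{id}$. The plan is: (a) in a collar of $\partial M$ build $\beta_0$ with $d\beta_0=\nu$ near $\partial M$ and $\beta_0|_{\partial M}=0$, by integrating $\nu$ in the normal directions stratum by stratum, inducting on the codimension of the corner strata; (b) note via Stokes (Proposition \ref{Stokes}) that $\int_M(\nu-d\beta_0)=\int_M\nu-\int_{\partial^1 M}j^*_{\partial^1 M}\beta_0=0$, so the interior remainder $\nu-d\beta_0$, compactly supported in $M\setminus\partial M$, is exact there since $H^m_c(M\setminus\partial M)\cong\mathbb R$; (c) add the resulting interior primitive to $\beta_0$ to obtain the required $\beta$. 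The hard part is step (a): on a codimension-$q$ corner where $q$ collars overlap, a primitive vanishing simultaneously on all incident faces exists precisely when $\nu$ vanishes at that corner, which is exactly the hypothesis $\mu_0=\mu_1$ on $\partial^{\ge 2}M$—already the codimension-two model shows that a nonzero corner value of $\nu$ is an obstruction. Organising this patching coherently across strata of all codimensions, while keeping everything $\sigma$-equivariant on the double cover, is where the subtlety of the argument lies.
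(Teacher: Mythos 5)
Your treatment of the first assertion and of the ``only if'' half of the boundary statement is correct, and it follows exactly the route the paper has in mind: the paper gives no proof of its own (it defers to \cite{BMPR18}) but assembles precisely the tools you invoke --- Stokes' theorem (Proposition \ref{Stokes}), the diagram identifying $\int_M\colon H^m_c(M,\partial M)\to\mathbb R$ as an isomorphism, and the equivalence $X\in\mathfrak X(M,\partial M)\Leftrightarrow i_X\mu\in\Omega^{m-1}(M,\partial M)$, which is what makes the Moser flow exist and preserve the strata on a manifold with corners. Your double-cover reduction with $\sigma$-anti-invariant averaging is sound, the observation that $\nu=\mu_0-\mu_1$ lies in $\Omega^m(M,\partial M)$ automatically is right, and the pointwise computation showing that $\varphi|_{\partial M}=\mathrm{id}$ forces $d\varphi_x=\mathrm{id}$ at every corner of codimension $\ge 2$ (because the tangent spaces of the $q\ge2$ faces through $x$ together span $T_xM$) is complete and correct.

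The ``if'' half, however, is a genuine gap. You correctly reduce it to your step (a): the existence of a primitive $\beta$ of $\nu$ near $\partial M$ that vanishes \emph{pointwise as a covector} on all of $\partial M$, given only that $\nu$ vanishes on $\partial^{\ge2}M$. But you do not prove this; you verify the obstruction at a codimension-two corner and then state that organising the construction ``coherently across strata of all codimensions \dots\ is where the subtlety of the argument lies.'' That subtlety is not a deferrable detail --- it is the entire content of the relative statement, and exactly what the paper flags with ``the changes required in the proof are quite subtle.'' Note that the difficulty is not even the local model (in $\mathbb R^2_{\ge0}$ one can split $f(x_1,x_2)=f(x_1,0)+f(0,x_2)+g$ with $g$ vanishing on both axes and write down a vanishing primitive for each piece using $f(0,0)=0$); it is the global patching over the stratified boundary, where faces of different codimension meet, where the choices on overlapping collars must agree, and where the corner condition must be propagated consistently --- all while preserving $\sigma$-equivariance on the double cover. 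Steps (b) and (c) of your plan are fine as stated, but without an actual proof of (a) your argument establishes the existence statement and the necessity of the corner condition, not the sufficiency; for that one still needs the construction carried out in \cite{BMPR18}.
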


\section{Diffeomorphism invariant tensor fields on the space of densities}

\begin{conjecture}
Let $M$ be a compact manifold with corners, of dimension $m\ge2$, and let 
$$\partial^pM=(\partial^pM)_1 \sqcup (\partial^pM)_2\sqcup \dots\sqcup (\partial^pM)_{n_p}$$
be the decomposition of the set of corners of codimension $p$ into its connected components which are manifolds of dimension $m-p$. 

Then the the associative algebra of bounded $\Diff_0(M)$-invariant tensor fields on $\operatorname{Dens}_+(M)$ is has the following set of generators, where $\mu\in \operatorname{Dens}_+(M)$ is the footpoint and $\alpha_i\in \Gamma(\operatorname{Vol}(M)) = T_\mu\operatorname{Dens}_+(M)$: 
\begin{align*}
&f(\mu(M)),\qquad\text{ where }f\in C^\infty(\mathbb R_{>0},\mathbb R)
\\&
\int_M \alpha_1 =  \int_M\frac{\alpha_1}{\mu}\,\mu
\\&
\int_M \frac{\alpha_1}{\mu}\dots\frac{\alpha_n}{\mu}\,\mu \quad n\ge2
\\&
\int_{(\partial^p M)_j} \frac{\alpha_1}{\mu}d\Big(\frac{\alpha_{i_2}}{\mu}\Big)\wedge \dots\wedge  d\Big(\frac{\alpha_{i_{m+1}}}{\mu}\Big)
\\&
\int_{(\partial^p M)_j} \frac{\alpha_1}{\mu}d\Big(\frac{\alpha_{i_2}}{\mu}\Big)\wedge \dots\wedge  d\Big(\frac{\alpha_{i_{m+1-p}}}{\mu}\Big), \quad p=1,\dots m-1, j=1,\dots, n_p 
\\&   
\frac{\alpha}{\mu}((\partial^mM)_j),  \quad\text{ for  }j=1,\dots,n_m;\quad \text{ note that }(\partial^mM)_j)\text{ is a point.}
\end{align*}  
\end{conjecture}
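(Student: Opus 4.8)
The plan is to adapt the closed-manifold classification of \cite{BBM2016} to the stratified geometry of $M$ while simultaneously lifting it from bilinear to $n$-linear tensor fields. Throughout I abbreviate $f_i=\alpha_i/\mu\in C^\infty(M)$, so that a bounded $n$-linear tensor field is, at each footpoint $\mu$, a bounded $n$-linear functional $A_\mu$ on $C^\infty(M)^n$. By the Schwartz kernel theorem, which transfers to $M$ precisely because $C^\infty(M)$ is a topological direct summand of $C^\infty(\tilde M)$ for an open extension $\tilde M$ (as recorded in the section on manifolds with corners), each $A_\mu$ is represented by a distribution $A_\mu\in C^\infty(M^n)'=\mathcal D'(M^n)$, i.e. an element of $\mathcal D'(\tilde M^n)$ with support in $M^n$.

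First I would reduce the dependence on the footpoint. Using $\Diff_0(M)$-invariance together with Moser's theorem for corners (Theorem \ref{Moser}), any two densities of equal total mass that agree at all corners of codimension $\ge 2$ are related by a diffeomorphism isotopic to the identity and fixing $\partial M$ pointwise. Hence $A_\mu$ can depend on $\mu$ only through the total mass $\mu(M)$ and through the finitely many values $\mu((\partial^mM)_j)$ at the top corners; the former yields the generators $f(\mu(M))$, and the latter is exactly why pointwise evaluation at $\partial^mM$ appears at all. After this reduction it suffices to classify, at a fixed reference density, the $\Diff_0(M)$-invariant distributions on $M^n$.

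The core is a support-and-normal-form analysis. Testing against diffeomorphisms that are the identity outside an arbitrarily small set (and tangent to every stratum, i.e. generated by $\mathfrak X(M,\partial M)$), the invariant kernel must be supported on the union of partial diagonals of $M^n$; on each such diagonal the arguments split into clusters of coinciding indices, the kernel factorizes over the clusters, and each cluster lies in a single stratum $(\partial^pM)_j$ (with $p=0$ for the open interior). On the interior diagonal the residual symmetry is the full group of (volume-preserving, then arbitrary) diffeomorphisms of $M\setminus\partial M$, which, exactly as in \cite{BBM2016}, annihilates every transversal derivative and leaves only multiplication operators, producing the bulk terms $\int_M f_1\cdots f_n\,\mu$ and, for $n=1$, $\int_M\alpha_1$. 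On a stratum $(\partial^pM)_j$ of dimension $m-p$ the residual group acts as $\Diff$ of that stratum together with a free rescaling in the $p$ normal directions; the rescaling again kills normal derivatives, while invariant-theory on the stratum itself selects precisely the naturally built $(m-p)$-forms $f_1\,df_{i_2}\wedge\dots\wedge df_{i_{m+1-p}}$ integrated over $(\partial^pM)_j$. At a top corner $p=m$ the stratum is a point, no form of positive degree survives, and only the evaluation $f((\partial^mM)_j)$ remains. Finally, products of invariant tensor fields are invariant and the cluster factorization above is exactly a product of such generators, so these functionals generate the asserted associative algebra.

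The main obstacle is the stratum analysis just sketched, which is where \cite{BBM2016} warns that the changes are subtle. Two points need genuine care. First, one must show that normal derivatives at a boundary stratum contribute nothing new: this rests on constructing, within $\mathfrak X(M,\partial M)$, vector fields that fix a stratum but scale its normal bundle, and checking that equivariance under their (necessarily footpoint-moving) flows forces the transversal order of the kernel to vanish, which is delicate because such fields must stay tangent to all deeper corners at once. Second, one must prove completeness, namely that the tangential part on each stratum is exhausted by the wedge expressions $f_1\,df_{i_2}\wedge\dots\wedge df_{i_{m+1-p}}$; this is a $\Diff$-invariant-distribution computation on the stratum, and the bookkeeping over all codimensions, all components, and all multilinear degrees is where the real work lies.
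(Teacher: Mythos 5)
First, a point of comparison that matters: the paper contains \emph{no proof} of this statement to measure your attempt against. It is explicitly posed as a conjecture, and the author states that parts of a proof are written while with other parts he is ``still fighting.'' Your proposal therefore has to stand on its own, and as it stands it is a strategy outline rather than a proof. The outline is the natural one --- adapt the kernel-theorem / invariant-distribution argument of Bauer--Bruveris--Michor from closed manifolds and bilinear forms to corners and $n$-linear fields --- but the two steps you yourself flag as needing ``genuine care'' are exactly the open content of the conjecture: (i) showing that transversal (normal) derivatives of a diagonal-supported invariant kernel are killed at a boundary stratum using only flows generated by $\mathfrak X(M,\partial M)$, which must stay tangent to all deeper corners simultaneously, and (ii) completeness, i.e.\ that the tangential part on each stratum $(\partial^pM)_j$ is exhausted by the wedge-integrals $\int f_1\, df_{i_2}\wedge\dots\wedge df_{i_{m+1-p}}$. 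Deferring these restates the problem rather than solving it. Likewise, the factorization of an invariant kernel over clusters on a partial diagonal, which is what produces products of generators in the genuinely multilinear case $n\ge 3$, is asserted but not argued; the cited classification only treats bilinear forms on closed manifolds and does not supply it.

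There is also a concrete error in the one step you do carry out, the footpoint reduction. The refinement of Moser's theorem for corners (that $\varphi$ can be chosen to fix $\partial M$ pointwise) has as hypothesis that the two densities agree at \emph{every} point of $\partial^{\ge 2}M$; for $2\le p\le m-1$ these strata are positive-dimensional, so this is infinitely many conditions, and your conclusion that $A_\mu$ depends on $\mu$ only through $\mu(M)$ and the finitely many top-corner values $\mu((\partial^mM)_j)$ does not follow from it. The mechanism is also misidentified: the value $\mu(x)$ at a $0$-dimensional corner is \emph{not} a $\Diff_0(M)$-orbit invariant, since a diffeomorphism fixing $x$ still rescales $\mu(x)$ by $|\det(d\varphi(x))|$; what is invariant is the quotient $(\alpha/\mu)(x)$, which is why that --- and not $\mu(x)$ --- appears among the generators. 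The correct route is the unrefined Moser statement: $\Diff_0(M)$-orbits in $\operatorname{Dens}_+(M)$ are classified by total mass alone, so one fixes a reference density $\mu_0$ and must then classify $n$-linear functionals invariant under the isotropy group of $\mu_0$; the corner evaluations then arise \emph{inside} that classification, as invariant delta-type distributions sitting at the points fixed by every element of $\Diff_0(M)$, not as parameters of the orbit. This is repairable, but the repair pushes all of the difficulty into precisely the isotropy-invariance analysis that your sketch leaves open.
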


Parts of the proof of the conjecture are already written and okay, with other parts I am still fighting.

\end{talk}

\begin{talk}{Xavier Pennec}
{Taylor expansions, Fr\'echet mean of an $n$-sample and CLT in affine Manifolds}{Pennec, Xavier}

\noindent

\section{Taylor expansions in manifolds}

Discrete methods for parallel transport in manifolds based on geodesics like Schild's ladder are regularly used in applications, for instance to transport longitudinal organ deformations from one subject to another in computational anatomy. However, analyzing the numerical accuracy of these algorithms requires to perform Taylor expansions of functions and vector fields in manifolds. In Riemannian manifolds, the Taylor expansion of the Riemannian metric in a normal coordinate system is well known for the first orders. 
However, going to orders higher than 4 is computationally much more involved. 

In a recent work, Gavrilov \cite{gavrilov_algebraic_2006,gavrilov_double_2007},
developed a very general expansion of the composition of two exponential maps in the spirit of the Baker-Campbell-Hausdorff (BCH) formula for Lie groups that holds in affine connection manifolds: 
the double exponential $\exp_x(v,u) = \exp_{\exp_x(v)}( \Pi_x^{\exp_x(v)} u)$ corresponds to a first geodesic shooting from the point $x$ along the vector $v$, followed by a second geodesic shooting from $y= \exp_x(v)$ along the parallel transport $\Pi_x^y u$ of the vector $u$ along the first geodesic.  For a symmetric affine connection, the Taylor expansion of the log of this endpoint $h_x(u,v) = \log_x(\exp_x(v,u))$  is:
\begin{equation}
\begin{split}
h_x(v, u) =\,  &v + u + \frac{1}{6}R(u,v)v + \frac{1}{3}R(u,v)u 
+ \frac{1}{24}\nabla_v R(u,v) (2v +5u)  \\ &+ \frac{1}{24}(\nabla_u R)(u,v) (v + 2u)
+O(5),
\end{split}
\end{equation}
where  $O(5)$ represents polynomial terms of order 5 or more in u and v.  
This surprisingly simple formulation allows to analyze very simply some of the discrete parallel transport algorithm and to analyze the impact of the curvature on the empirical Fr\'echet mean.

One can easily come back to the Riemannian setting to compute the Taylor expansion of the squared geodesic distance between two points $x_v = \exp_x(v)$ and $x_w = \exp_x(w)$ that are close to $x$:
\begin{equation}
\begin{split}
\text{dist}^2(x_v, x_w) = \, & \| w - v \|^2_x  + \frac{1}{3} \left< R(w,v)w \, , \, v \right>_x
+\frac{1}{12}  \left< \nabla_w R(w,v)w\, , \, v\right>_x  
\\ & + \frac{1}{12}  \left< \nabla_v R(w,v)w\, , \, v\right>_x 
+O(6)  .
\end{split}
\end{equation}

\section{Analysis of the pole ladder parallel transport algorithm}

Pole ladder is a variation of Schild's ladder to transport along a geodesic which is used as one of the edges of the geodesic parallelogram constructed to approximate the transport. One step of pole ladder transport of  the vector $u_p$ at $p$ to $u_q$ at $q$ along the geodesic segment $[p,q]$ can written as a composition of two geodesic symmetries: $u_q = s_q(s_m(\exp_p(u_p)))$, where $s_p(q) = \exp(-\log_p(q))$ and $m$ is the mid-point of $[p,q]$. Steps of Schild's and pole ladders were shown to be first order approximation of the parallel transport $\Pi_p^q u_q$. Computing higher order terms for the pole ladder turn out to be quite simple thanks to Gavrilov's expansion. Let $u = \Pi_p^m u_p$ (resp. $u' = \Pi_q^m u_q$) be the parallel transport of $u_p$ at $p$ (resp. $u_q$ at $q$) to the mid-point $m$ along the geodesic segment $[p,q] = [\exp_m(-v),\exp_m(v)]$. We find that the error on one step of pole ladder is:
\begin{equation}
u' = u + \frac{1}{12} (\nabla_v R)(u,v)(5 u - 2v) + \frac{1}{12}  (\nabla_{u} R)(u,v)(v -2 u) +O(5).
\end{equation}

We see that each step of pole ladder is a third order approximation: this is much better than expected. Moreover, the fourth order error term vanish in affine symmetric spaces because the curvature is covariantly constant. In fact, one can actually prove that all error terms vanish in a convex normal neighborhood of an affine connection space: one step of pole ladder realizes an exact parallel transport (provided that geodesics and mid-points are computed exactly of course) \cite{pennec2018parallel}. This result makes pole ladder a very attractive scheme for Lie groups endowed with their canonical  symmetric space structure (the symmetric Cartan-Schouten connection).

\section{Fr\'echet mean of an \texorpdfstring{$n$}{n}-sample and CLT in affine Manifolds}

Let $\mu$ be a probability density in an affine manifold whose support is included in a sufficiently small geodesic ball $U$ of radius $\epsilon$ (measured with an auxiliary metric) such that the exponential barycenter is unique in this ball (Karcher/Kendall uniqueness conditions for the Fr\'echet mean in Riemannian manifolds). The $k$-moment of this distribution is the $k$-covariant tensor field  $\mathfrak{M}_k(x) = \int_U \log_x(y) \otimes^k \mu(dy)$.  
The first moment $\mathfrak{M}_1(x)$ is a vector field whose zero is the unique exponential barycenters / Fr\'echet mean $\bar x$. The second moment $\mathfrak{M}_2(x)$ is an SPD matrix field whose value at $\bar x$ is the covariance matrix. 

Using Gavrilov's  double exponential, we can compute the series expansion of $\mathfrak{M}_1(\exp_x(v))$in the neighborhood of any point $x$.  
Solving for the value of $v$ that zeros out the first moment leads to the approximation of the field $\log_x(\bar x)$ pointing from each point to the mean $\bar x$:
 \begin{equation}
\label{LogFrechet}
\begin{split}
\log_x(\bar x) = &
\mathfrak{M}_1  - \frac{1}{3} R(\bullet, \mathfrak{M}_1)\bullet : \mathfrak{M}_2 
 \frac{1}{24} \nabla_\bullet R(\bullet,\mathfrak{M}_1)\mathfrak{M}_1 : \mathfrak{M}_2 
\\ & -\frac{1}{8} \nabla_{\mathfrak{M}_1} R(\bullet, \mathfrak{M}_1) \bullet : \mathfrak{M}_2
-\frac{1}{12} \nabla_\bullet R(\bullet, \mathfrak{M}_1) \bullet :\mathfrak{M}_3 + O(\epsilon^5),
\end{split}
\end{equation} 
where the notation $R(\bullet, \mathfrak{M}_1)\bullet : \mathfrak{M}_2$ denotes the contraction of the curvature tensor $R$ along the axes specified by the dots.

An $n$-sample  $\{x_1, \ldots x_n\}$ of IID variables can be encoded as the discrete probability distribution $X_n = \frac{1}{n} \sum_{i=1}^n \delta_{x_i}$. The moments of this distribution are the $k$-covariant tensor fields  $\hat {\mathfrak{M}}^n_k(x) = \sum_{i=1}^n\log_x(x_i)\otimes^k$.  The empirical Fr\'echet mean $\bar x_n$ of this sample solves $\hat {\mathfrak{M}}^n_1(\bar x_n) =0$. Because the $n$-sample is a random variable, the moments are random tensor fields and the empirical Fr\'echet mean of an $n$-sample is a random variable on the manifold. 
The random vector $z_n = \log_{\bar x}(\bar x_n)$ quantifies how much it differs from the Fr\'echet mean $\bar x$ of the underlying distribution. To characterize the law of this random vector, we have to compute the expectation  of the empirical moments at the point $\bar x$.
Using formula \ref{LogFrechet}, which is valid for singular distributions, we get a series expansion of $\log_{\bar x}(\bar x_n)$ expressed in terms of empirical moments $\hat {\mathfrak{M}}_k$.
However, the expectation (w.r.t. the product law $\mu^{\otimes n}$) of an empirical moment does not exactly commutes with the tensor product of two (or more) empirical moments. We have indeed:
\[
{I\!\!E}( \hat {\mathfrak{M}}^n_k ) = {\mathfrak{M}}^n_k
\quad \text{and} \quad 
{I\!\!E}( \hat {\mathfrak{M}}^n_k \otimes \hat {\mathfrak{M}}^n_l ) = \frac{n-1}{n} {\mathfrak{M}}^n_k \otimes {\mathfrak{M}}^n_l + \frac{1}{n} {\mathfrak{M}}^n_{k+l}.
\]
Because $\mathfrak{M}_1$ vanishes at the Fr\'echet mean, many terms disappear, but the higher order moment in the above formula generates non-vanishing terms. 

The first moment of $z_n$ is the bias of the empirical Fr\'echet mean:
 \begin{equation}
\label{Bias}
b_n = {I\!\!E}( \log_{\bar x}(\bar x_n)) = {I\!\!E}( \hat {\mathfrak{M}}^n_1(\bar x) ) =
\frac{1}{6n} \mathfrak{M}_2: \nabla R : \mathfrak{M}_2 + O\left(\epsilon^5, \frac{1}{n^2}\right).
\end{equation}
Surprisingly, this bias is in $1/n$ and does not disappear with the classical scaling by $\sqrt{n}$ for a finite number of samples.
This bias is a double contraction of  the covariant derivative of the Riemannian curvature. It is thus vanishing for symmetric spaces which may explain why it has not been noticed so far. For a smooth  affine manifold with finite curvature and curvature gradient, the bias vanishes asymptotically so that the empirical mean remains a consistent estimator. However, in the limit case of  a manifold concentrating the curvature at singularities (e.g. corners), the curvature gradient is going to infinity and the empirical mean may become inconsistent. We conjecture that this phenomenon may explain sticky means  and repulsive means in stratified spaces.

The second moment of $z_n$ is the expected covariance matrix of $\bar x_n$: 
\begin{equation}
\label{Covariance}
\Sigma_n  
= {I\!\!E}( \hat {\mathfrak{M}}^n_2(\bar x) ) 
= \frac{1}{n} \mathfrak{M}_2 + \frac{1}{3n} \mathfrak{M}_2 : R : \mathfrak{M}_2 + O\left(\epsilon^5, \frac{1}{n^2}\right).
\end{equation} 
The classical rate of convergence $\frac{1}{n}\mathfrak{M}_2$ toward the mean is modified by a curvature term $\mathfrak{M}_2 : R : \mathfrak{M}_2$ which is also in $1/n$ which  accelerates the convergence rate with respected to the Euclidean case in negatively curved spaces, and slows  it down in positively curved spaces. This result is in line with the covariance formula of \cite{bhattacharya_statistics_2008} when  we perform the Tauylor expansion of the Hessian of the variance to make the curvature  appearing.

\end{talk}

\begin{talk}{Stephen Preston}{Solar models for 1D Euler-Arnold equations}{Preston, Stephen}

For $T\in (0,\infty]$, we consider a function $u(t,x)$ for $t\in [0,T)$ and $x\in \mathbb{S}^1 = \mathbb{R}/\mathbb{Z}$,
satisfying a PDE of the form
\begin{equation}\label{generalizedeuler}
Lu_t + uLu_x + 2u_x Lu = 0,
\end{equation}
where $L$ is some pseudodifferential operator.
This is an Euler-Arnold equation. Examples of primary interest include:
\begin{itemize}
\item Hunter-Saxton equation: $Lu=-u_{xx}$. 
\item Wunsch equation: $Lu = Hu_x$, where $H$ is the Hilbert transform.
\end{itemize}
We assume the mean of $u$ is zero: $\int_0^1 u(t,x)\,dx = 0$ for all $t$.
For any such equation there is a Lagrangian flow $\eta(t,x)$ defined by solving the equation
\begin{equation}\label{lagrangianflow}
\eta_t(t,x) = u\big(t,\eta(t,x)\big), \qquad \eta(0,x) = x.
\end{equation} 
The momentum is $\omega = Lu$, and it satisfies the conservation law
\begin{equation}\label{vorticityconservation}
\eta_x(t,x)^2 \omega\big(t,\eta(t,x)\big) = \omega_0(x), \quad \forall t\ge 0.
\end{equation}

\begin{theorem}
Consider the Hunter-Saxton equation
$u_{txx} + uu_{xxx} + 2u_xu_{xx}=0$
on the circle with Lagrangian flow as in \eqref{lagrangianflow}. Let $\rho(t,x) = \sqrt{\eta_x(t,x)}$. 
Let $\Gamma(t,x) = \big( \begin{smallmatrix} \rho(t,x) \\ -2\rho_x(t,x)\end{smallmatrix}\big)$; then $\Gamma$ satisfies the equation
\begin{equation}\label{HSsolar}
\Gamma_{tt}(t,x) = -K^2 \Gamma(t,x), \qquad \Gamma(0,x) = \Big( \begin{matrix} 1 \\ 0\end{matrix}\Big), \qquad \Gamma_t(0,x) = \Big( \begin{matrix} \tfrac{1}{2}u_0'(x) \\ \omega_0(x) \end{matrix}\Big),
\end{equation}
where $K^2 = \tfrac{1}{4} \int_0^1 u_0'(x)^2 \,dx$.

Furthermore the (constant) angular momentum of this central force system is given by $\Gamma_1 \dot{\Gamma}_2 - \Gamma_2 \dot{\Gamma}_1 = \omega_0(x)$.
Solutions of this system for $\Gamma$ exist globally, and we have $\eta_x = (\Gamma_1)^2$.
The Lagrangian flow leaves the diffeomorphism group at time $T$ when $\eta_x$ reaches zero, which first happens at a value of $x_0$ where $\omega_0$ changes from positive to negative (such a value always exists since $\omega_0$ has mean zero). At $t=T$, the trajectory $\Gamma(T,x_0)$ reaches the origin.
\end{theorem}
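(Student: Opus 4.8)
The plan is to reduce the whole statement to a single scalar Riccati equation for $m(t,x):=u_x(t,\eta(t,x))$ and then to exhibit $\rho=\sqrt{\eta_x}$ as the substitution that linearises it into the claimed harmonic oscillator. First I would differentiate the flow equation \eqref{lagrangianflow} in $x$ to get $\eta_{tx}=u_x(t,\eta)\,\eta_x=m\,\eta_x$, whence
\[
\rho_t=\tfrac12\eta_x^{-1/2}\eta_{tx}=\tfrac12\,m\,\rho .
\]
A further $x$-differentiation of this, together with the conservation law \eqref{vorticityconservation} written as $\eta_x^2\,\omega(t,\eta)=\omega_0$ with $\omega=Lu=-u_{xx}$ (so $u_{xx}(t,\eta)=-\omega_0/\rho^4$), yields $\dot\Gamma_2=-2\rho_{tx}=\omega_0/\rho-m\rho_x$. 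Evaluating at $t=0$, where $\eta=\mathrm{id}$, $\rho=1$, $\rho_x=0$ and $m=u_0'$, reproduces the stated initial data $\Gamma(0,\cdot)=(1,0)^{\top}$ and $\Gamma_t(0,\cdot)=(\tfrac12 u_0',\,\omega_0)^{\top}$.

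The crux, which I expect to be the main obstacle, is computing $m_t$. Integrating the Hunter--Saxton equation once in $x$ gives its primitive form $u_{tx}=-uu_{xx}-\tfrac12 u_x^2+C(t)$; integrating this over the circle and using periodicity and $\int_0^1 u\,dx=0$ forces $C(t)=-\tfrac12\int_0^1 u_x^2\,dx$, and since $\int_0^1 u_x^2\,dx$ is the conserved energy of the flow we obtain $C(t)\equiv-2K^2$ with $K^2=\tfrac14\int_0^1 u_0'(x)^2\,dx$. Then
\[
m_t=u_{tx}(t,\eta)+u_{xx}(t,\eta)\,u(t,\eta)=-\tfrac12 m^2-2K^2 ,
\]
the two $u\,u_{xx}$ terms cancelling. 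Since $m=2\rho_t/\rho$, differentiating $\rho_t=\tfrac12 m\rho$ in $t$ and substituting the Riccati relation gives $\rho_{tt}=-K^2\rho$, i.e.\ $\ddot\Gamma_1=-K^2\Gamma_1$. Differentiating the identity $\rho_{tt}=-K^2\rho$ in $x$ gives $\rho_{ttx}=-K^2\rho_x$, hence $\ddot\Gamma_2=-K^2\Gamma_2$, which establishes \eqref{HSsolar}.

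The remaining assertions follow from the explicit orbit. For fixed $x$ the quantity $\Gamma_1\dot\Gamma_2-\Gamma_2\dot\Gamma_1$ has $t$-derivative $\Gamma_1\ddot\Gamma_2-\Gamma_2\ddot\Gamma_1=-K^2(\Gamma_1\Gamma_2-\Gamma_2\Gamma_1)=0$, so it is constant and equals its value $1\cdot\omega_0-0=\omega_0(x)$ at $t=0$, giving the angular momentum. As $-K^2$ is a negative constant independent of $x$, \eqref{HSsolar} has the globally defined solution $\Gamma_1(t,x)=\cos(Kt)+\tfrac{u_0'(x)}{2K}\sin(Kt)$, proving global existence, and $\eta_x=\rho^2=\Gamma_1^2$ by definition.

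Finally, $\eta_x$ first vanishes when $\Gamma_1$ first reaches zero. Writing $\Gamma_1(t,x)=R(x)\cos(Kt-\phi(x))$ with $\tan\phi=u_0'/(2K)$ and $|\phi|<\tfrac\pi2$, the first zero is at $t^*(x)=\big(\tfrac\pi2+\phi(x)\big)/K$, which is increasing in $u_0'(x)$; hence $T=\min_x t^*(x)$ is attained at a global minimiser $x_0$ of $u_0'$. There $u_0''(x_0)=0$ with $u_0''$ passing from negative to positive, so $\omega_0=-u_0''$ changes from positive to negative, and such a point exists because $\omega_0$ has zero mean. At $x_0$ the angular momentum $\omega_0(x_0)$ vanishes, so $\Gamma_1(T)\dot\Gamma_2(T)-\Gamma_2(T)\dot\Gamma_1(T)=0$; since $\Gamma_1(T,x_0)=0$ while $\dot\Gamma_1(T,x_0)=-R(x_0)K\neq0$, this forces $\Gamma_2(T,x_0)=0$, so $\Gamma(T,x_0)$ reaches the origin. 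Once the Riccati equation with the correct constant $-2K^2$ is in hand and the linearising role of $\rho=\sqrt{\eta_x}$ is recognised, the geometry of the zero-angular-momentum orbit delivers the rest with no further difficulty.
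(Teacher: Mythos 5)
Your proof is correct, and it is worth noting that the paper itself gives no proof of this theorem: it is stated as announced work, with the key substitution $\rho=\sqrt{\eta_x}$ attributed to Lenells, whose cited argument is geometric. Since $\int_0^1\eta_x\,dx=1$, the map $\eta\mapsto\rho$ sends the Lagrangian Hunter--Saxton flow into the unit sphere of $L^2(\mathbb{S}^1)$, where it becomes a geodesic, i.e.\ a great circle traversed at constant speed $K$, whence $\rho_{tt}=-K^2\rho$; the second component $\Gamma_2=-2\rho_x$, the angular momentum $\omega_0$, and the breakdown statement are then read off by differentiating in $x$, as you do. Your route reaches the same oscillator equation by direct computation instead: integrate the equation once in $x$, pin down the nonlocal constant $C(t)=-\tfrac{1}{2}\int_0^1u_x^2\,dx=-2K^2$ by periodicity and energy conservation, derive the Riccati equation $m_t=-\tfrac{1}{2}m^2-2K^2$ for $m=u_x\circ\eta$, and observe that $\rho=\sqrt{\eta_x}$, via $\rho_t=\tfrac{1}{2}m\rho$, linearises it. What the geometric route buys is conceptual transparency (global existence and the meaning of $K$ are immediate for constant-speed great circles); what your computation buys is a self-contained verification free of infinite-dimensional Riemannian formalism, together with a more explicit treatment of breakdown through the phase $t^*(x)=\bigl(\tfrac{\pi}{2}+\phi(x)\bigr)/K$ and the zero-angular-momentum argument at the minimiser of $u_0'$. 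Two small points to tidy: conservation of $\int_0^1 u_x^2\,dx$, which you invoke to get $C(t)\equiv-2K^2$, deserves its one-line check (multiply the integrated equation by $u_x$ and integrate over the circle; all terms cancel by periodicity), or can be quoted as the geodesic energy of the Euler--Arnold flow; and your explicit formulas implicitly assume $K\neq0$, which is harmless since $K=0$ forces $u_0\equiv0$ and the statement is then vacuous.
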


The transformation $\rho = \sqrt{\eta_x}$ turns the Lagrangian Hunter-Saxton equation into the geodesic equation on the infinite-dimensional sphere, as first noticed by Lenells~\cite{Lenells1}. The fact that solutions in the $\rho$-variable are \emph{global} was used by Lenells~\cite{Lenells2} to obtain global conservative weak solutions. See Figure \ref{figurecap} for the illustration of how $\rho$ goes negative (corresponding to $\eta_x$ approaching zero) and what this looks like in the solar model.

\begin{figure}[h]
\begin{center}
$j=0$: \includegraphics[scale=0.25]{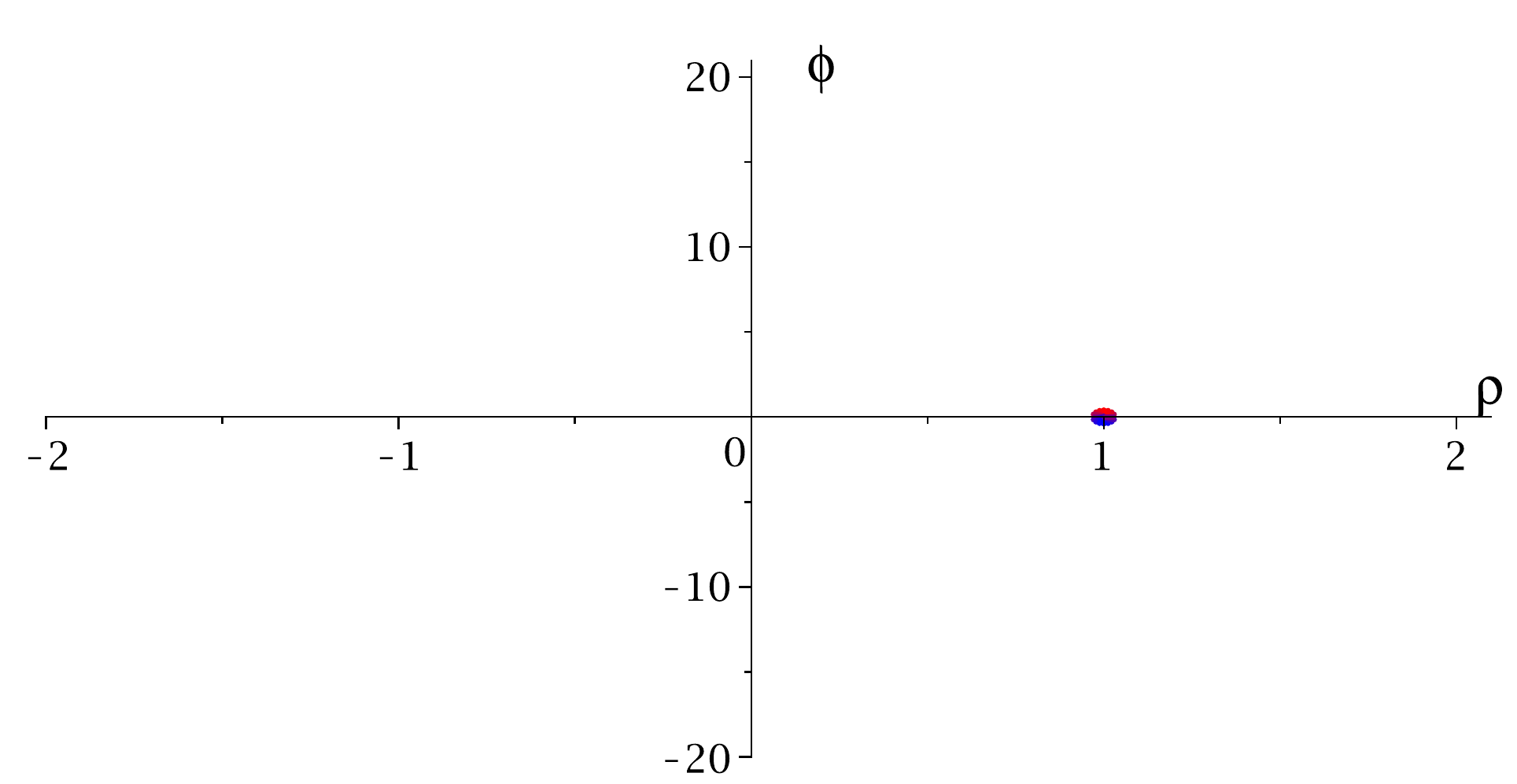} \includegraphics[scale=0.25]{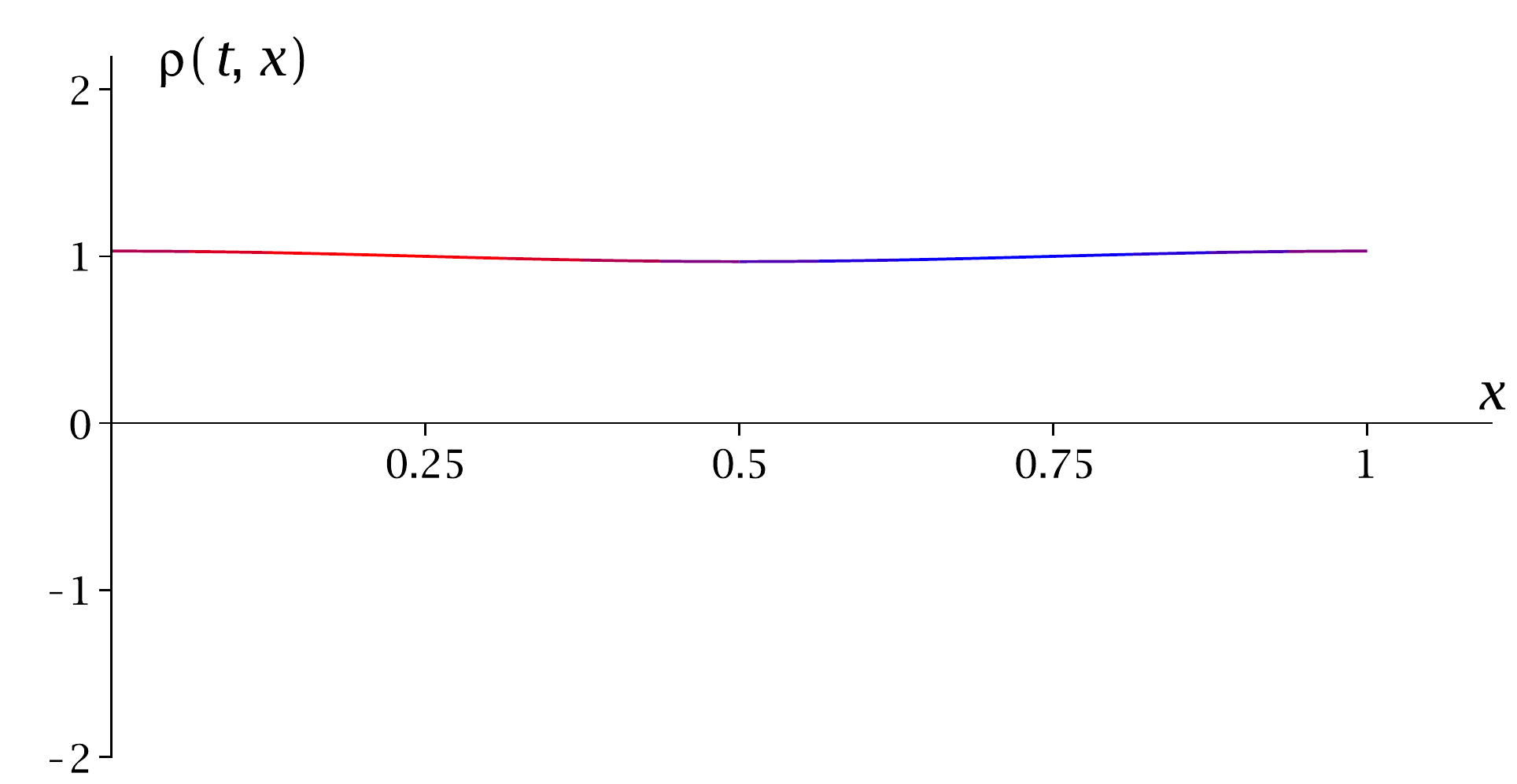} \\
$j=1$: \includegraphics[scale=0.25]{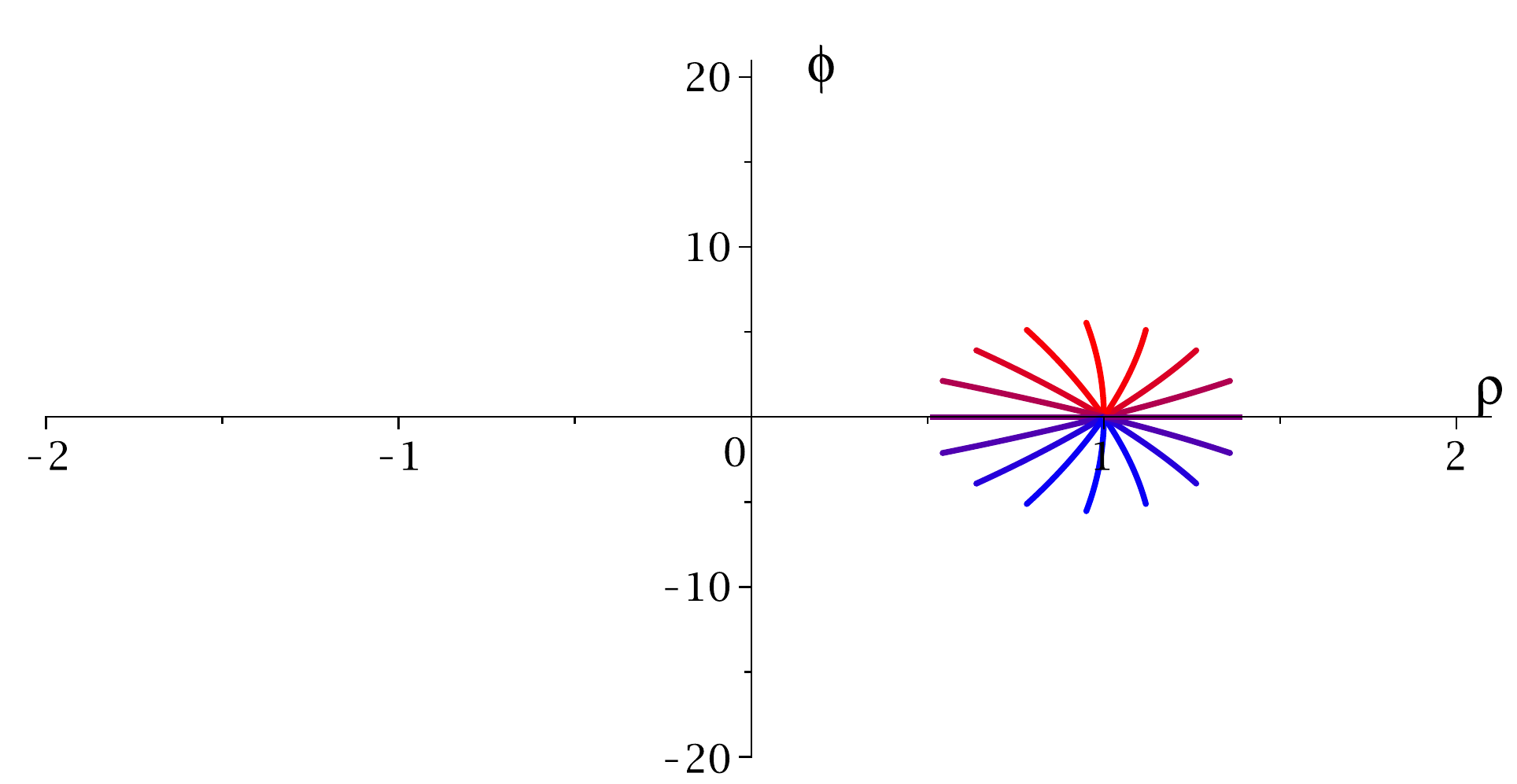} \includegraphics[scale=0.25]{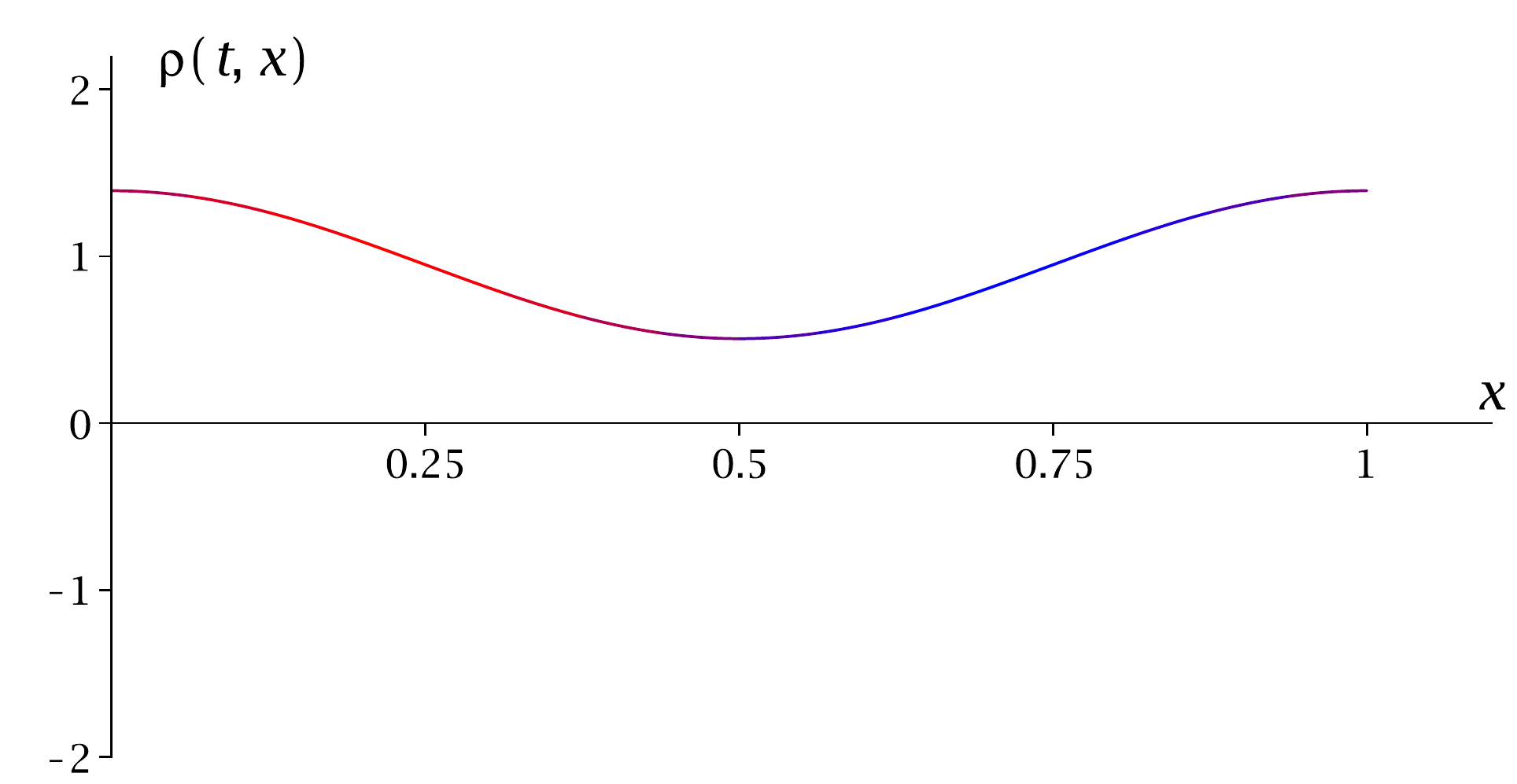} \\
$j=2$: \includegraphics[scale=0.25]{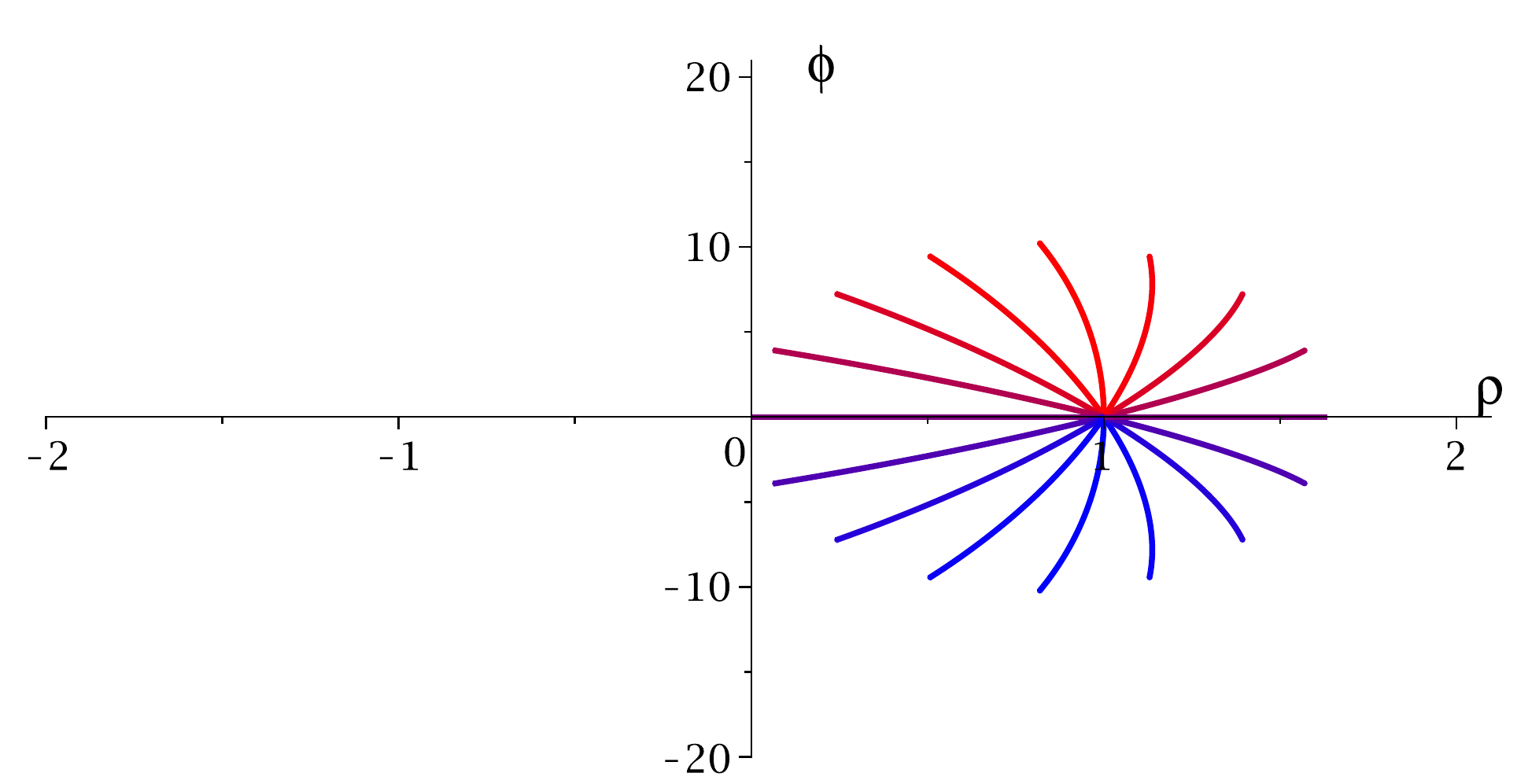} \includegraphics[scale=0.25]{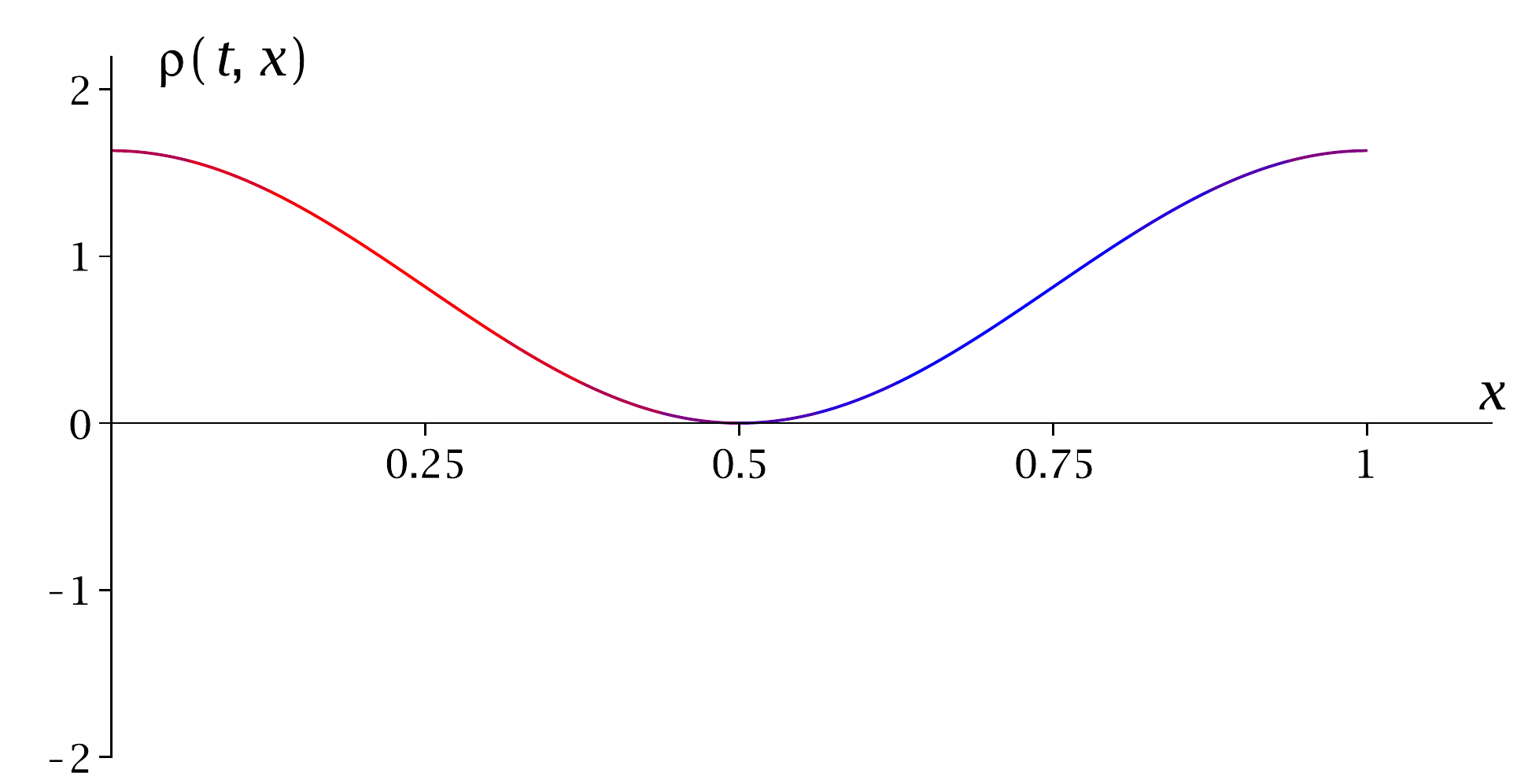} \\
$j=3$: \includegraphics[scale=0.25]{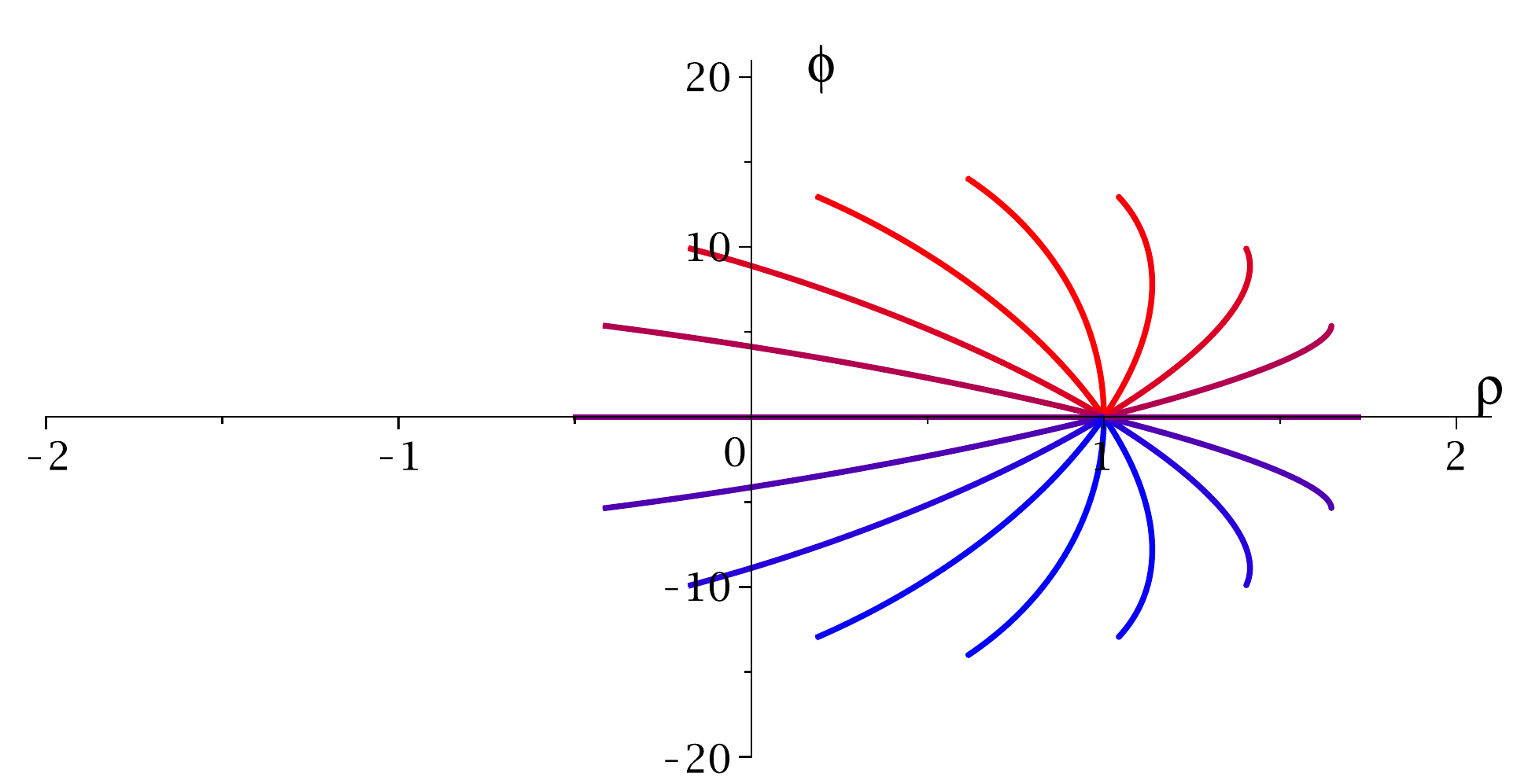} \includegraphics[scale=0.25]{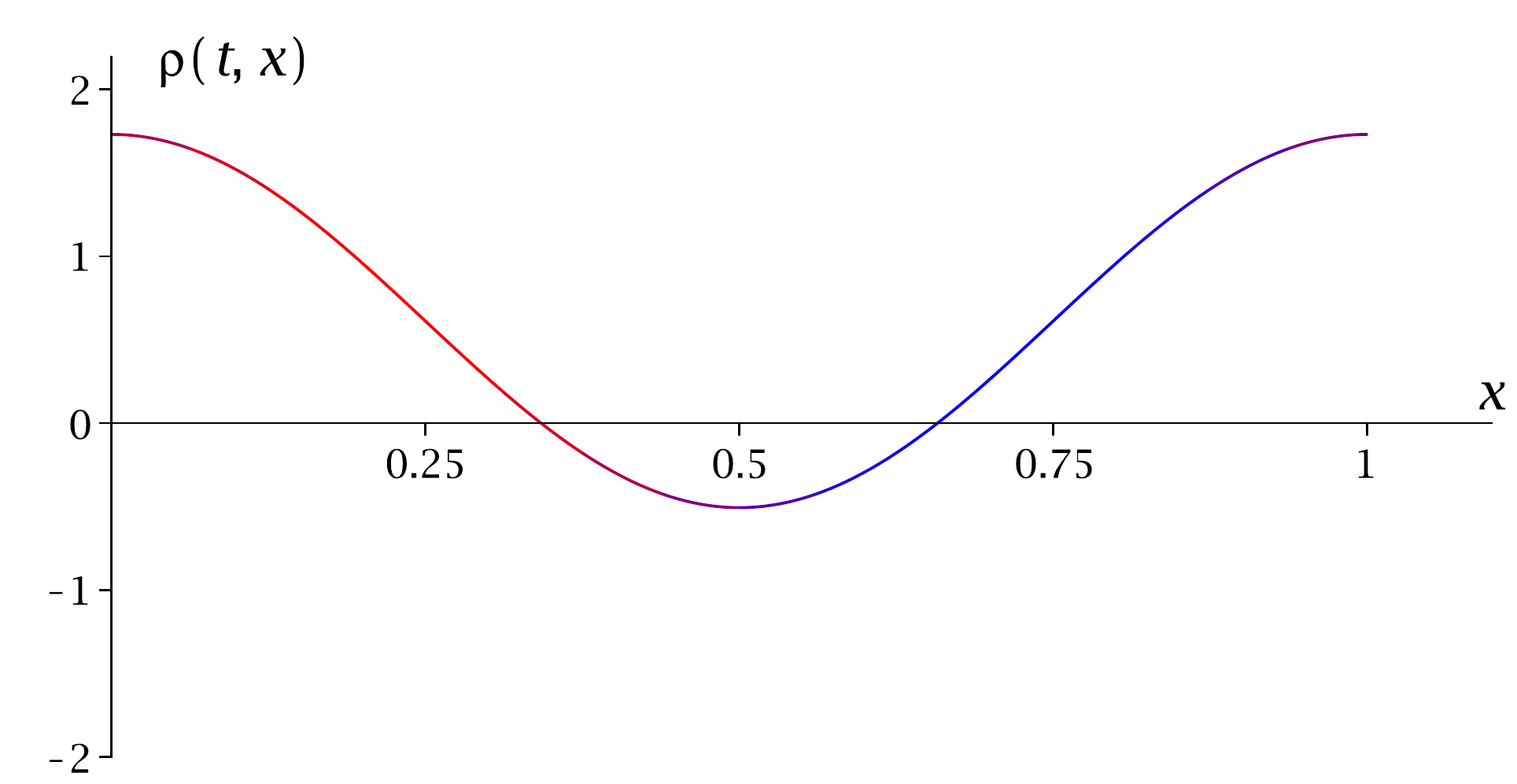} \\
$j=4$: \includegraphics[scale=0.25]{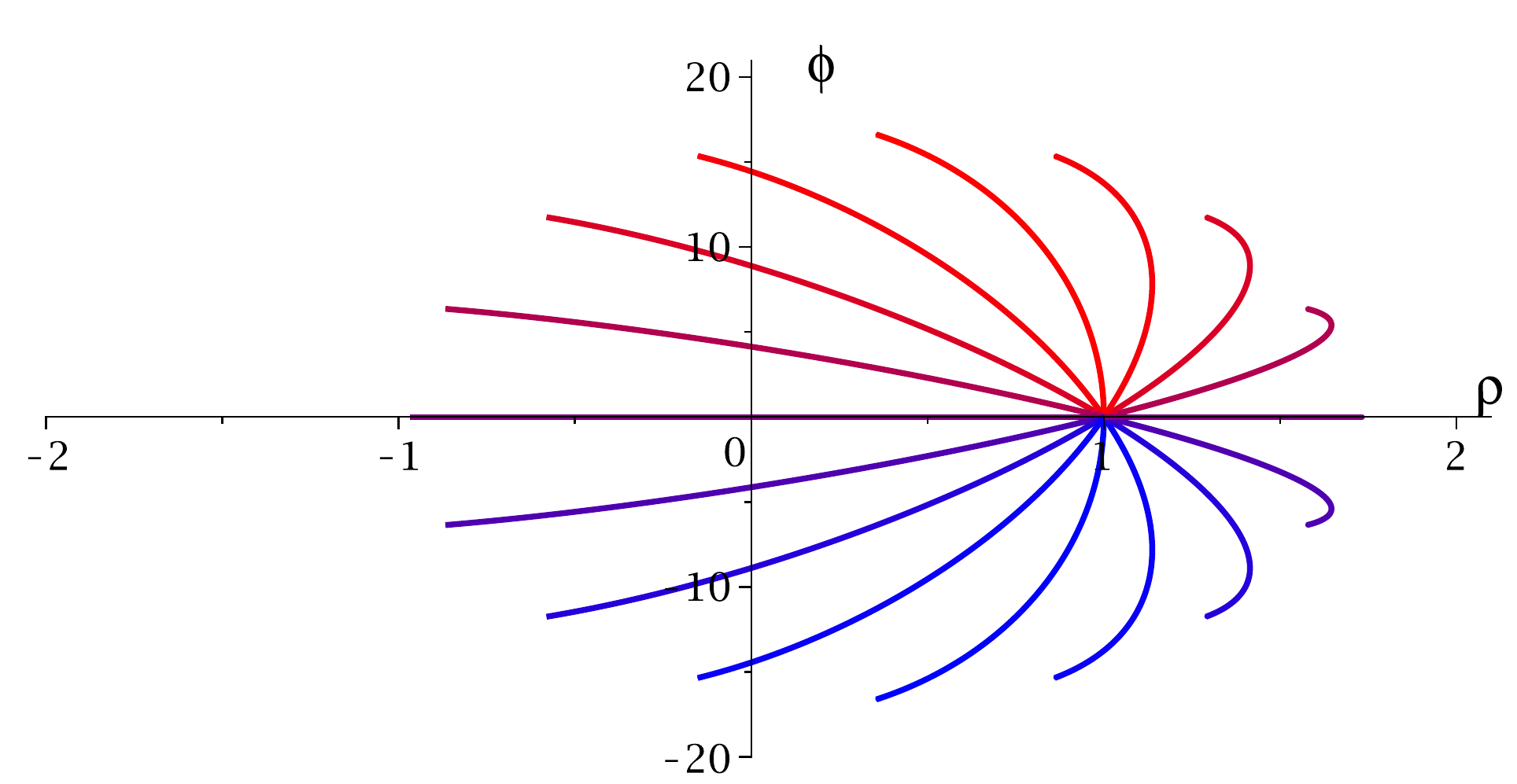} \includegraphics[scale=0.25]{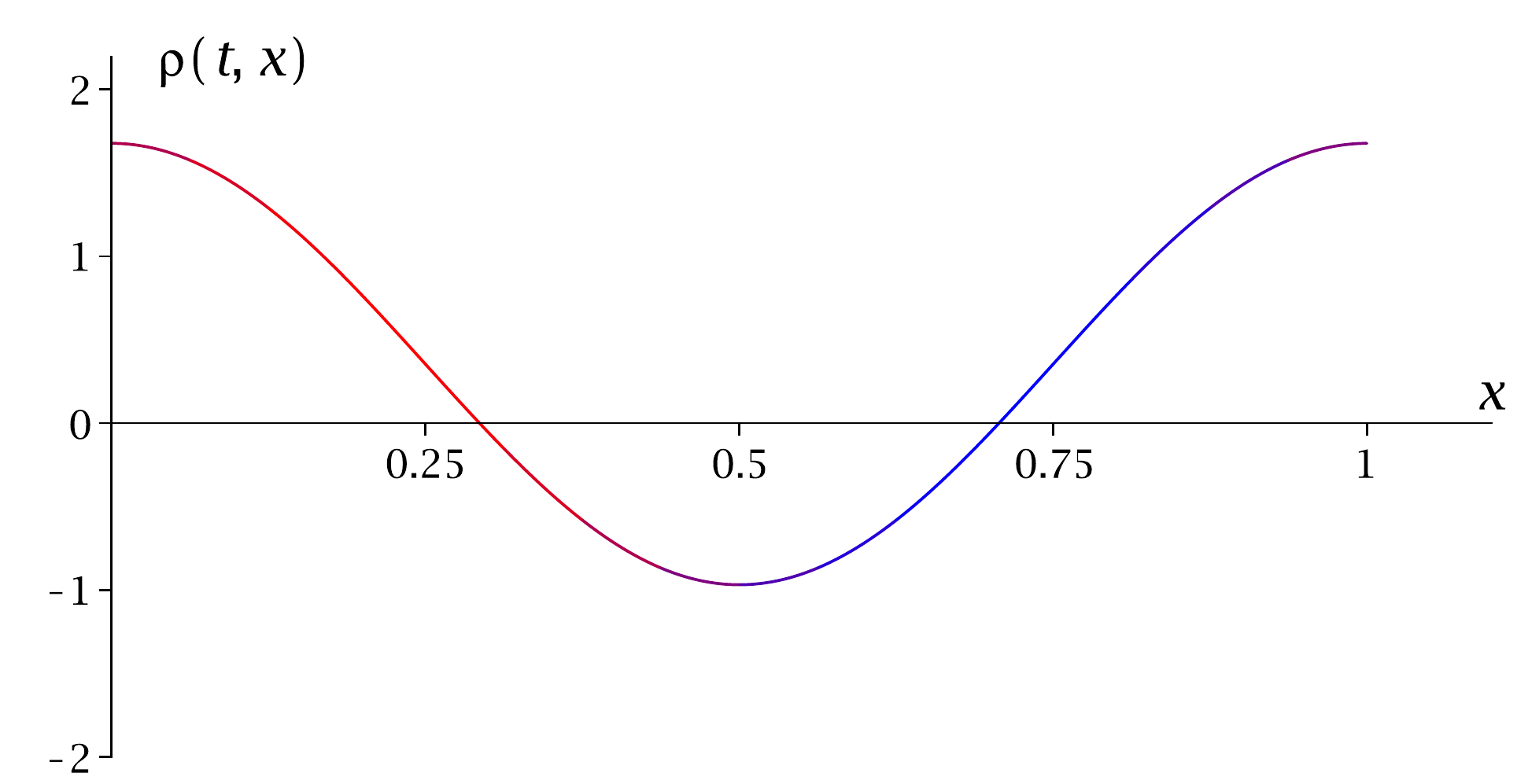} \\ 
\end{center}
\caption{Here we show both the solar model on the left and the solution $\rho(t,x)=\sqrt{\eta_x(t,x)}$ on the right for the Hunter-Saxton equation, with initial condition $u_0(x) = \sin{(2\pi x)}$ for $t_j = 0.01+.1335j$. In the solar model particles emerge from $(1,0)$ with velocity $\langle v_0(x),\omega_0(x)\rangle$ and approach the vertical wall $\rho=0$: first one particle hits the origin coming directly along the horizontal axis at $j=2$, then others follow. On the right $\rho$ and $\rho_x$ have simultaneously reached zero, and the classical solution $u(t,x)$ breaks down. However the solution continues in these variables without noticing. Points colored red have positive angular momentum, while those in blue have negative angular momentum: the first breakdown occurs at the transition.}\label{figurecap}
\end{figure}

For the Wunsch equation we have a similar interpretation. In Lagrangian coordinates it takes the form
\begin{equation}\label{wunschlagrangian}
\eta_{ttx}(t,x) = \frac{\omega_0(x)^2}{\eta_x(t,x)^3} -F\big(t,\eta(t,x)\big)\eta_x(t,x),
\end{equation}
where the function $F$ is a nonlocal expression given by
$$ F(t,x) = -uu_{xx} - H\big(u H(u_{xx})\big);$$
a computation from Bauer-Kolev-Preston~\cite{BKP} shows that $F(t,x)>0$ for all $x$ regardless of $u$. Writing
$$ \Gamma(t,x) = \eta_x(t,x) e^{i\theta(t,x)}$$
where $\theta(t,x) = \omega_0(x) \int_0^t \frac{d\tau}{\eta_x(\tau,x)^2}$ turns the Lagrangian Wunsch equation into
$$ \Gamma_{tt}(t,x) = -F\big(t,\eta(t,x)\big) \Gamma(t,x).$$
We have $\Gamma(0,x) \equiv 1$ and $\Gamma_t(0,x) = u_0'(x) + i\omega_0(x)$. Note that the blowup problem is the same:
$\eta_x(t,x)\to 0$ in finite time if and only if $\Gamma(t,x)$ approaches the origin; on the other hand nonzero
angular momentum prevents this from happening unless the central force $F(t,x)$ is extremely strong. Preston-Washabaugh~\cite{PW}
showed that every solution breaks down in finite time at a point $x_0\in\mathbb{S}^1$ where $u_0'(x_0)<0$ and $\omega_0(x_0)=0$, which always exists.

We conjecture that there is a similar picture for every one-dimensional Euler-Arnold equation. This would entail the following:
\begin{definition}
A \textbf{solar model} for an Euler-Arnold equation on the circle is a function $\Gamma\colon [0,T)\times\mathbb{S}^1 \to \mathbb{R}^2$
such that
\begin{itemize}
\item $\Gamma_{tt}(t,x) = -F(t,x) \Gamma(t,x)$ for some scalar function $F$;
\item The conserved angular momentum $\Gamma_1\dot{\Gamma}_2 - \Gamma_2\dot{\Gamma}_1$ is precisely the conserved Euler-Arnold momentum $\eta_x(t,x)^2 Lu(t,\eta(t,x))$;
\item $\eta(t,x)$ and thus $u(t,x)$ can be recovered from $\Gamma(t,x)$;
\item Breakdown in the form $\eta_x\to 0$ or $u_x\to -\infty$ occurs iff $\Gamma(t,x)$ approaches the origin.
\end{itemize}
\end{definition}

Above we have illustrated solar models for Hunter-Saxton and Wunsch. Solar models should give a way to prove breakdown or global existence in a simpler and more systematic way. Note that the breakdown mechanism is completely understood for the Hunter-Saxton equation (with explicit solutions) and for the Camassa-Holm equation (via an argument of McKean~\cite{McKean} that relies heavily on special properties of that equation).

The problem for now is to devise a solar model for the $\mu$-Hunter Saxton equation, and use it to solve the complete breakdown problem (which remains open). The conjecture is that breakdown occurs iff the momentum $\omega_0(x)$ changes sign, just as happens for the HS, Wunsch, and CH equations.

\end{talk}

\clearpage

\begin{talk}{Stefan Sommer}{Sample Maximum Likelihood Means}{Sommer, Stefan}
The Euclidean expected value $\bar{Y}=\mathbb E[Y]$ of a distribution $Y$ can be generalized to distributions on Riemannian manifolds with, for example, the Fr\'echet mean
\begin{equation}
\bar{Y}_{\mathrm{Fr}}
=
\mathrm{argmin}_x\mathbb E[d(x,Y)^2]
\label{eq:Fr_mean}
\end{equation}
or a maximum likelihood mean
\begin{equation}
\bar{Y}_{\mathrm{ML}}
=
\mathrm{argmax}_x\mathbb E[\log p(Y; x)]
\label{eq:ML_mean}
\end{equation}
where $p(y; x)$ denotes the value at $y$ of the density of a distribution dependent on a parameter $x$. Examples of such distributions include the transition density of a Brownian motion started at $x$ and stopped at some fixed time $T>0$ in which case $p(\cdot; x)$ is the solution to the Riemannian heat equation.

Given i.i.d. samples $y_1,\ldots,y_n\in M$, the sample equivalents of \eqref{eq:Fr_mean} and \eqref{eq:ML_mean}
\begin{align}
&\bar{Y}^n_{\mathrm{Fr}}
=
\mathrm{argmin}_xn^{-1}\sum_{i=1}^n d(x,y_i)^2
\label{eq:Fr_mean_estimator}
\\
&\bar{Y}^n_{\mathrm{ML}}
=
\mathrm{argmax}_xn^{-1}\sum_{i=1}^n[\log p(y_i; x)]
\end{align}
are generally optimized by gradient based iterative optimization. For the Fr\'echet mean, this involves at each iteration computing the distance from the candidate $y$ to the data points, each such computation being in general itself an optimization problem. For the maximum likelihood mean, the likelihood appears in case of an underlying stochastic process starting at $x$ by taking expectation of the sample paths hitting $y_i$
\begin{equation*}
p_T(y_i; x)
=
\frac{\mathbb E[\mathbf 1_{x_T\in dy_i}]}{dy_i}
\ .
\end{equation*}
Computationally, this can be approximated by sampling from the bridge process $x_t|x_T=y_i$ conditioned on hitting $y_i$ at time $t=T$, e.g. \cite{sommer_bridge_2017}. As for the Fr\'echet mean, optimizing over the parameter $x$, the starting point of the process, is a computationally expensive optimization problem.

In the talk, we related these optimization procedures to the central limit theorem (CLT, \cite{bhattacharya_large_2005}) for the Fr\'echet mean estimator \eqref{eq:Fr_mean_estimator} and discussed possibilities for sampling ML means directly from the CLT limiting distribution to avoid the direct optimization procedures.

\end{talk}

\begin{talk}[Andrea Natale]{Fran\c{c}ois-Xavier Vialard}
{Embedding EPDiff equation in incompressible Euler}
{Vialard, Fran\c{c}ois-Xavier}
\section{\texorpdfstring{An $L^2$ embedding of the $H^{\Div}$ right-invariant metric}{An L2 embedding of the H div right-invariant metric}}
Let us consider a closed Riemannian manifold $(M,g)$ and the group of (smooth) diffeomorphisms $\operatorname{Diff}(M)$ endowed with the $H^{\Div}$ right-invariant metric defined as follows:
\begin{definition}
The $H^{\Div}$ right-invariant metric on $\operatorname{Diff}(M)$ is defined at a diffeomorphism $\varphi$ and tangent vector $X_\varphi \in T_\varphi \operatorname{Diff}(M)$ by
\begin{equation}
G(\varphi)(X,X) \eqdef \int_M g(X_\varphi \circ \varphi^{-1},X_\varphi \circ \varphi^{-1}) \ud\!\vol + \frac 14 \int_M | \Div(X_\varphi \circ \varphi^{-1}) |^2 \ud\!\vol\,. 
\end{equation}
where $\Div$ denotes the divergence operator and $\vol$ is the Riemannian volume form of $(M,g)$.
\end{definition}
The embedding of the diffeomorphism group with this $H^{\Div}$ metric is based on the following idea, write redundantly $\varphi$ as $(\varphi,\Jac(\varphi))$ as an element $(\varphi,\lambda)$, then we rewrite the metric in terms of these two quantities. To do so, we observe that the tangent vector to $\lambda = \Jac(\varphi)$ denoted $X_\lambda$ is given by $ X_\lambda = \Div(X_\varphi \circ \varphi^{-1}) \lambda$. Now, the metric is obtained by a change of variable on the $L^2$ norm of the first term of the metric and the second term is obtained by direct rewriting:
\begin{equation}\label{EqMetric}
G(\varphi)(X,X) = \int_M g(X ,X) \lambda \ud\!\vol + \frac 14 \int_M \frac{X_\lambda^2}{ \lambda} \ud\!\vol\,. 
\end{equation}
\begin{definition}\label{DefCone}
The Riemannian cone $\mathcal{C}(M)$ is the product manifold $M \times (0,+\infty)$ endowed with the metric $\lambda g + \frac 14 \frac{\ud \lambda^2}{\lambda}$.
\end{definition}
We call it a cone metric because with the change of variable $r^2 = \lambda$ we get $r^2g + \ud r^2$ which is the usual cone metric in Riemannian geometry. In particular, if $M = S_1$, this metric is the Euclidean metric on $\R^2 \setminus \{ 0 \}$. 
Note that the metric defined in \eqref{EqMetric} is a metric on the space of maps from $M$ into $M \times (0,+\infty)$. It is actually the $L^2$ metric on $L^2(M,\mathcal{C}(M))$ where $M$ is endowed the Riemannian volume measure and $\mathcal{C}(M)$ endowed with the cone metric.
Therefore, we have, as a direct consequence of the definitions
\begin{theorem}
The map 
\begin{align*}
\operatorname{Inj}:&\operatorname{Diff}(M) \to L^2(M,\mathcal{C}(M))\\
&\varphi \mapsto  (\varphi,\Jac(\varphi))\,.
\end{align*}
is an isometric embedding between the right-invariant metric $H^{\Div}$ and the $L^2_{\ud\!\vol}(M,\mathcal{C}(M))$ (non right-invariant) metric.
\end{theorem}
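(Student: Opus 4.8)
The plan is to reduce the embedding claim to a single isometry computation. Since the first component of $\operatorname{Inj}(\varphi)=(\varphi,\Jac(\varphi))$ already recovers $\varphi$, the map is injective and its differential is injective on each tangent space for free; the only genuine content is that $T\operatorname{Inj}$ carries the $H^{\Div}$ inner product to the ambient $L^2_{\ud\!\vol}(M,\mathcal C(M))$ inner product. First I would record the differential. For $X_\varphi\in T_\varphi\operatorname{Diff}(M)$, pick a path $\varphi_t$ with $\varphi_0=\varphi$ and $\partial_t|_{t=0}\varphi_t=X_\varphi$; then $T_\varphi\operatorname{Inj}(X_\varphi)=(X_\varphi,X_\lambda)$, where $X_\lambda=\partial_t|_{t=0}\Jac(\varphi_t)\in C^\infty(M)$ is the variation of the cone fiber coordinate.

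The key step, which I expect to be the main (if routine) obstacle, is the linearization of the Jacobian, best phrased as a continuity equation. Writing $u_t=X_{\varphi_t}\circ\varphi_t^{-1}$ for the Eulerian velocity and $\lambda_t=\Jac(\varphi_t)$, the defining relation $\varphi_t^*\vol=\lambda_t\,\vol$, together with $\partial_t\,\varphi_t^*=\varphi_t^*\mathcal L_{u_t}$ and Cartan's formula $\mathcal L_{u_t}\vol=(\Div u_t)\,\vol$, gives $\partial_t\lambda_t=(\Div u_t\circ\varphi_t)\,\lambda_t$. Evaluating at $t=0$ produces exactly $X_\lambda=\big(\Div(X_\varphi\circ\varphi^{-1})\circ\varphi\big)\,\lambda$, the identity stated in the text; the composition with $\varphi$ is the point one must not drop, even though the shorthand suppresses it. This is where Jacobi's formula for the derivative of a determinant enters, and the only care needed is to keep the Eulerian divergence composed with $\varphi$.

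Finally I would pull back the ambient metric. By Definition \ref{DefCone} and the definition of the $L^2_{\ud\!\vol}(M,\mathcal C(M))$ metric, the squared norm of $(X_\varphi,X_\lambda)$ along $\operatorname{Inj}(\varphi)$ is $\int_M\big[\lambda\,g(X_\varphi,X_\varphi)+\tfrac14\,X_\lambda^2/\lambda\big]\ud\!\vol$, with $g$ evaluated at the footpoint $\varphi(x)$ and $\lambda$ at $x$. Substituting the continuity equation and performing the change of variables $y=\varphi(x)$, under which $\ud\!\vol(y)=\lambda(x)\,\ud\!\vol(x)$, turns the first integrand into $\int_M g(X_\varphi\circ\varphi^{-1},X_\varphi\circ\varphi^{-1})\,\ud\!\vol$ and the second into $\tfrac14\int_M|\Div(X_\varphi\circ\varphi^{-1})|^2\,\ud\!\vol$, whose sum is precisely $G(\varphi)(X,X)$, the content of \eqref{EqMetric}. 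Hence $\operatorname{Inj}$ is an isometry onto its image; being already an injective immersion, it is the claimed isometric embedding. As the text indicates, once the continuity equation for $X_\lambda$ and the single change of variables $y=\varphi(x)$ are in hand, the remainder is bookkeeping.
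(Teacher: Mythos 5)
Your proof is correct and takes essentially the same route as the paper: the paper's ``direct consequence of the definitions'' is exactly your computation, namely linearizing the Jacobian to get $X_\lambda=\big(\Div(X_\varphi\circ\varphi^{-1})\circ\varphi\big)\,\Jac(\varphi)$, pulling back the cone metric, and converting both terms to the $H^{\Div}$ form via the change of variables $y=\varphi(x)$, which is precisely how \eqref{EqMetric} is derived there. Your explicit tracking of the suppressed composition with $\varphi$, and your observation that the change of variables is needed in the divergence term as well (not only in the first term, as the paper's loose phrasing suggests), simply make the same argument more careful.
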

The obvious application is the following:
\begin{theorem}[Michor and Mumford 2005]
The distance on $\operatorname{Diff}(M)$ with the right-invariant $H^{\Div}$ metric is non-degenerate (the infimum of the length of the paths connecting two given distinct diffeomorphisms is not zero).
\end{theorem}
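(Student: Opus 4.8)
The plan is to read off non-degeneracy directly from the isometric embedding of the previous theorem, transferring the question to the much simpler target $L^2_{\ud\!\vol}(M,\mathcal{C}(M))$. The only structural fact I need about the target is that, away from its vertex, the cone $\mathcal{C}(M)=M\times(0,+\infty)$ is a genuine finite-dimensional Riemannian manifold, so its geodesic distance $d_{\mathcal{C}}$ is a bona fide metric, strictly positive on any pair of distinct points.

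First I would fix $\varphi_0\neq\varphi_1$ and an arbitrary (absolutely continuous) path $s\mapsto\varphi_s$ in $\operatorname{Diff}(M)$ joining them. Because $\operatorname{Inj}$ is an isometric embedding, the $H^{\Div}$-length of this path equals the $L^2$-length of its image $s\mapsto\Phi_s\eqdef(\varphi_s,\Jac(\varphi_s))$. For each fixed $x\in M$, the evaluated curve $s\mapsto\Phi_s(x)$ is a path in $\mathcal{C}(M)$ running from $\operatorname{Inj}(\varphi_0)(x)$ to $\operatorname{Inj}(\varphi_1)(x)$, so its cone-length $\int_0^1|\partial_s\Phi_s(x)|_{\mathcal{C}}\,ds$ is at least $d_{\mathcal{C}}\big(\operatorname{Inj}(\varphi_0)(x),\operatorname{Inj}(\varphi_1)(x)\big)$.

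The key step is to pass from these pointwise bounds to a bound on the $L^2$-length. Writing $\operatorname{Length}_{L^2}(\Phi)=\int_0^1\big(\int_M|\partial_s\Phi_s(x)|_{\mathcal{C}}^2\ud\!\vol\big)^{1/2}ds$ and applying Minkowski's integral inequality with exponent $2$ (interchanging the $s$- and $x$-integrations) I expect to obtain
\begin{equation*}
\operatorname{Length}_{L^2}(\Phi)\ge\left(\int_M\Big(\int_0^1|\partial_s\Phi_s(x)|_{\mathcal{C}}\,ds\Big)^2\ud\!\vol\right)^{1/2}\ge\left(\int_M d_{\mathcal{C}}\big(\operatorname{Inj}(\varphi_0)(x),\operatorname{Inj}(\varphi_1)(x)\big)^2\ud\!\vol\right)^{1/2}.
\end{equation*}
The right-hand side depends only on the endpoints, not on the path, so taking the infimum over all $\varphi_s$ yields the very same lower bound for the $H^{\Div}$-distance $d(\varphi_0,\varphi_1)$.

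It then remains to see this bound is strictly positive. Since $\varphi_0$ and $\varphi_1$ are continuous and distinct, the set $\{x:\varphi_0(x)\neq\varphi_1(x)\}$ is nonempty and open, hence of positive volume; on it the base points in $M$ differ, so $\operatorname{Inj}(\varphi_0)(x)\neq\operatorname{Inj}(\varphi_1)(x)$ and the integrand is strictly positive, forcing the integral to be positive. The main obstacle I anticipate is purely technical rather than conceptual: justifying the pointwise evaluation $s\mapsto\Phi_s(x)$ and Minkowski's inequality at the correct level of regularity (restricting to absolutely continuous paths suffices), and checking that along any path of diffeomorphisms the Jacobian $\Jac(\varphi_s)$ stays strictly positive, so that $\Phi_s$ genuinely takes values in the open cone where $d_{\mathcal{C}}$ is a true metric and the positivity argument applies.
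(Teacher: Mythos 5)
Your proposal is correct and takes essentially the same route as the paper: the paper's one-line proof bounds the $H^{\Div}$-length of any path below by the geodesic distance in $L^2(M,\mathcal{C}(M))$, which is nonzero for distinct endpoints, and that is precisely your argument. You simply make explicit what the paper leaves implicit --- the pointwise reduction to cone distances, Minkowski's integral inequality, and the positivity of the resulting integral over the open set where $\varphi_0\neq\varphi_1$ --- which is a faithful elaboration rather than a different method.
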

\begin{proof}
The length of any path is bounded below by the length of the geodesic in the space $L^2(M,\mathcal{C}(M))$ which is not null if the two diffeomorphisms are different.
\end{proof}

\section{Embeddings in incompressible Euler}

Using an isomorphism between the semi-direct product of groups $\operatorname{Diff}(M) \ltimes C(M,\R_{>0})$ and the automorphism group of the cone $\mathcal{C}(M)$, it is possible to prove that the geodesic flow for the $H^{\Div}$ right-invariant metric can be embedded in the space of solutions to the incompressible Euler equation via the following map, close to a Madelung transformation (up to a square root change of variables on the Jacobian):
\begin{equation}
[x \mapsto \varphi(x)] \longmapsto [(x,r) \mapsto (\varphi(x),r\Jac(\varphi)(x)]
\end{equation}
This transformation maps diffeomorphisms of $M$ to automorphisms of $\mathcal{C}(M)$ which are volume preserving for the density $\frac{1}{r^2}\ud r \ud \!\vol_M$. From this remark, it is possible to prove that the $H^{\Div}$ geodesic flow can be embedded in the incompressible Euler flow on the cone for this particular density. One can also get rid of this density by adding a dummy dimension as done in \cite{Tao}. As a result, one can embed the Camassa--Holm equation in incompressible Euler of a $3$ dimensional curved manifold, see \cite{Natale1} for more details. In this paper, we also show that it is also possible to embed the $CH2$ equation, which is a generalization of the Camassa--Holm equation and can be presented as a geodesic flow for a right-invariant metric on a semi-direct product of groups.
\par
Therefore, we are led to the following question,
\begin{question}
For which right-invariant metrics on $\operatorname{Diff}(M)$ such an embedding into the incompressible Euler equation of a Riemannian manifold $N$ is possible ?
\end{question}
One could start with two cases of interest: (1) the $H^1$ metric on the group of diffeomorphisms in dimension greater than $1$ and (2) A higher-order Sobolev metric on $S_1$.

\end{talk}

\end{report}

\end{document}